\documentclass[12pt,a4paper,pdfps]{article}
\bibliographystyle{unsrt}
\usepackage[cp1251]{inputenc}
\usepackage[english]{babel}
\usepackage{amsmath,amssymb,amsthm}
\usepackage{graphicx}
\usepackage{comment}
\usepackage{enumitem}
\usepackage{color}
\usepackage[colorlinks,linkcolor=blue,filecolor=blue,citecolor=blue]{hyperref}
\oddsidemargin=0cm
\textwidth=16cm

\DeclareMathOperator{\ad}{\mathrm{ad}}
\DeclareMathOperator{\Ad}{\mathrm{Ad}}
\DeclareMathOperator{\Exp}{\mathrm{Exp}}
\DeclareMathOperator{\sspan}{\mathrm{span}}
\DeclareMathOperator{\codim}{\mathrm{codim}}
\DeclareMathOperator{\Aut}{\mathrm{Aut}}
\newcommand{\tmax}{t_{\mathrm{max}}}
\newcommand{\tconj}{t_{\mathrm{conj}}}
\newcommand{\g}{\mathfrak{g}}
\newcommand{\R}{\mathbb{R}}

\newcommand{\id}{\mathrm{id}}
\newcommand{\SU}{\mathrm{SU}}
\newcommand{\SL}{\mathrm{SL}}
\newcommand{\PSL}{\mathrm{PSL}}
\newcommand{\SO}{\mathrm{SO}}

\newcommand{\SE}{\mathrm{SE}}
\newcommand{\SH}{\mathrm{SH}}
\newcommand{\Hl}{\vec{H}}
\newcommand{\Hh}{\Hl_{\mathrm{hor} }}

\newcommand{\Hv}{\Hl_{\mathrm{vert}}}
\newcommand{\Hr}{\Hl_R}
\newcommand{\RH}{H_R}

\theoremstyle{definition}
\newtheorem{definition}{Definition}
\newtheorem{remark}{Remark}
\newtheorem{example}{Example}

\theoremstyle{plain}
\newtheorem{corollary}{Corollary}
\newtheorem{lemma}{Lemma}
\newtheorem{theorem}{Theorem}
\newtheorem{proposition}{Proposition}


\selectlanguage{english}

\title{Symmetries in left-invariant optimal control problems
\footnote{This work is supported by the Russian Science Foundation
under grant 17-11-01387 and performed in A.\,K.~Ailamazyan Program Systems
Institute of Russian Academy of Sciences.}
}

\author{
A.\,V.~Podobryaev \\ A.\,K.~Ailamazyan Program Systems
Institute of RAS \\ \tt{alex@alex.botik.ru}
}

\begin{document}

\maketitle

\begin{abstract}
We consider left-invariant optimal control problems on connected Lie groups.
We describe the symmetries of the exponential map that are induced by the symmetries of the vertical part of the Hamiltonian system of Pontryagin maximum principle.
These symmetries play a key role in investigation of optimality of extremal trajectories.
For connected Lie groups such that generic coadjoint orbit has codimension not more than 1 and connected stabilizer
we introduce a general construction for such symmetries of the exponential map.

\textbf{Keywords}: symmetry, geometric control theory, Riemannian geometry,
sub-Riemannian geometry.

\textbf{AMS subject classification}:
49J15, 
53C17. 

\end{abstract}

\section*{\label{section-introduction}Intoduction}
Geometric control theory (see for example~\cite{agrachev-sachkov}) deals with left-invariant optimal control problems on a Lie group $G$. Consider a family of left-invariant vector fields $F_u$ that depend analytically on $u \in U \subset \R^n$. Consider also  a left-invariant analytic function $\varphi: G\times U \rightarrow \R$, a point $q_1 \in G$, and a fixed time $t_1 > 0$. The problem is to find a control $u \in  L^{\infty}([0, t_1], U)$ and a Lipschitz curve $q_u : [0, t_1] \rightarrow G$ such that
\begin{equation}
\label{eq-optimal-control-problem}
\int_0^{t_1} \varphi(q_u(t), u(t))\, dt \rightarrow \min, \quad \dot{q}_u(t) = F_{u(t)}(q_u(t)), \quad q_u(0) = \id,
\quad q_u(t_1) = q_1 \in G.
\end{equation}
Consider functions $h_u$ on the cotangent bundle $T^*G$ that depend on parameter $u\in U$:
$$
h_u(\lambda) = \lambda(F_u(\pi(\lambda))) - \varphi(\pi(\lambda), u), \qquad \lambda\in T^*G,
$$
where $\pi: T^*G \rightarrow G$ is the natural projection. Assume that for all $\lambda\in T^*G$ the quadratic form $\frac{\partial^2}{\partial u^2}h_u(\lambda)$ is negative definite and the function $u \mapsto h_u(\lambda)$ has maximum. Then via Pontryagin maximum principle~\cite{pontryagin, agrachev-sachkov} we obtain a Hamiltonian differential equation on the cotangent bundle $T^*G$, such that its phase curves project to optimal trajectories on the group $G$:
\begin{equation}
\label{eq-Hamiltonian-system}
\dot{\lambda} = \Hl(\lambda), \qquad \pi(\lambda(t)) = q_u(t), \qquad \lambda: [0, t_1] \rightarrow T^*G,
\end{equation}
where $H(\lambda) = \max_{u\in U}{h_u(\lambda)}$ is the analytic maximized Hamiltonian of Pontryagin maximum principle, $\Hl$ is the corresponding analytic Hamiltonian vector field. The curve $\lambda(t)$ is called \emph{a normal extremal}. Next we will consider only such extremals. The curve $q_u(t)$ is called \emph{a normal extremal trajectory}.

If we have an explicit solution of differential equation~\eqref{eq-Hamiltonian-system}, then we have a parametri\-zation of extremal trajectories. After that it remains to study optimality of extremal trajectories.

\begin{definition}
\label{def-Maxwell-point-and-time}
\emph{A Maxwell point for an optimal control problem~$~\eqref{eq-optimal-control-problem}$} is a point where two distinct extremal trajectories meet one another with the same value of the cost functional and the time. This time is called \emph{a Maxwell time}.
\end{definition}

It is well known (see for example~\cite{sachkov-didona1}), that an extremal trajectory can not be optimal after a Maxwell point. That is why description of Maxwell points plays an important role in investigation of optimality of extremal trajectories. In particular, the first Maxwell time is an upper bound for the time of loss of optimality (\emph{the cut time}).

A natural reason of appearance of Maxwell points is a symmetry of extremal trajectories.
Let us give definitions.

\begin{definition}
\label{def-exponential-map}
\emph{The exponential map of problem~$\eqref{eq-optimal-control-problem}$} is the map
$$
\Exp : \g^* \times \R_+ \rightarrow G, \qquad \Exp{(p, t)} = \pi \circ e^{t \Hl} (\id, p), \qquad (p, t) \in \g^* \times \R_+,
$$
where $\g$ is the Lie algebra of the Lie group $G$, and $e^{t \Hl}$ is the flow of the Hamiltonian vector field $\Hl$.
\end{definition}

\begin{definition}
\label{def-symmetry-of-exponential-map}
\emph{A symmetry of the exponential map} is a pair of diffeomorphisms
$$
s: \mathcal{W} \times \R_+ \rightarrow \mathcal{W} \times \R_+, \quad S: G \rightarrow G \quad \text{such that} \quad \Exp \circ s = S \circ \Exp,
$$
where $\mathcal{W} \subset \g^*$ is an open dense subset.
\end{definition}

Consider the trivialization of the cotangent bundle via left shifts:
$$
\tau: G\times\g^*\rightarrow T^*G, \qquad \lambda = \tau(g, p) = dL_{g^{-1}}^* (p) \in T^*_gG, \qquad g \in G, \qquad p \in \g^* = T^*_{\id}G.
$$
where $L_g:G\rightarrow G$ is the left shift by the element $g\in G$.

The Hamiltonian $H$ is left-invariant, so we assume that $H\in
C^{\infty}(\g^*)$. A Hamiltonian vector field is a sum of {\it
the horizontal} and {\it the vertical parts}~\cite{agrachev-sachkov}:
\begin{equation}
\label{eq-hor-vert-parts}
\begin{array}{c}
\Hl(\tau(g, p)) = d_{(g, p)}\tau (\Hh(g, p) + \Hv(p)),\\
\Hh(g, p) = dL_g d_p H, \qquad \Hv(p) = (\ad^* d_p H)p,\\
\end{array}
\end{equation}
where $d_p H \in T_p^*\g^* \simeq \g$ is the differential of $H$ at a point $p$.

The Hamiltonian system $\dot{\lambda} = \Hl(\lambda)$ is triangular (its vertical part is independent of state variables). So, one can naturally consider symmetries of the exponential map induced by symmetries of the vertical part of the Hamiltonian system (see complete statement in Theorem~\ref{th-symmetry}).

A plan of investigation of optimality of extremal trajectories reads as follows.
\begin{enumerate}
\item Parametrization of extremal trajectories.
\item Description of symmetries of the vertical part of the Hamiltonian system. Extension of these symmetries to symmetries of the exponential map.
\item Search for Maxwell points that correspond to symmetries. Search for the first Maxwell time as a function $\tmax : \g^* \rightarrow \R_+ \cup \{+\infty\}$.
\item Estimation of the first conjugate time, i.e., the function $\tconj : \g^* \rightarrow \R_+ \cup \{+\infty\}$ such that a pair $(p, \tconj{(p)})$ is a critical point of the exponential map.
\item Verification of the condition $\tmax{(p)} \leqslant \tconj{(p)}$ for almost all $p \in \g^*$.
\item Application of the Hadamard theorem on global diffeomorphism~\cite{krantz-parks} to the map
    $$
    \Exp{(\cdot, t_1)}: \{p\in\g^*\setminus 0 \ | \ t_1 < \tmax(p)\} \rightarrow G \setminus (\{\id\}\cup\overline{\mathcal{M}}),
    $$
    where $\overline{\mathcal{M}}$ is the closure of the Maxwell set.(A smooth non-degenerate proper map of connected and simply connected manifolds of equal dimensions is a diffeomorphism.)
\end{enumerate}
We need items~4 and 5 to verify the non-degenerateness condition of the Hadamard theorem.
If implementation of all these steps is complete, than the first Maxwell time is actually the cut time.

Notice, that implementation of this program is not guaranteed.
For example, symmetries of the vertical part of the Hamiltonian system may not produce a complete description of the Maxwell set. Such situation appears in Euler elasticae problem~\cite{sachkov-elasticae, ardentov-elasticae}.
However this method works in several sub-Riemannian~\cite{agrachev-barilari-boscain} and Riemannian problems (see references below):

\begin{enumerate}
\item
\label{pr-Heisenberg}
Sub-Riemannian problem on Heisenberg type group \\ (C.~Autenried, M.~Godoy~Molina~\cite{autenried-molina}).
\item
\label{pr-nilpotent-3-6}
Free nilpotent sub-Riemannian problem with growth vector $(3, 6)$ \\ (O.~Myasnichenko~\cite{myasnichenko}
and independently A.~Montanari and D.~Morbidelli~\cite{montanari-morbidelli}, also some results in general case of free step two
Carnot group achieved by L.~Rizzi and U.~Serres~\cite{rizzi-serres}).
\item
\label{pr-Dido}
Generalized Dido problem (Yu.\,L.~Sachkov~\cite{sachkov-didona, sachkov-didona1, sachkov-didona2}).
\item
\label{pr-Engel}
Sub-Riemannian problem on Engel group (A.\,A.~Ardentov, Yu.\,L.~Sachkov~\cite{ardentov-sachkov-engel-1, ardentov-sachkov-engel-2, ardentov-sachkov-engel-3, ardentov-sachkov-engel-4}).
\item Sub-Riemanian problems on the Lie groups $\SL_2(\R)$, $\PSL_2(\R)$, $\SO_3$, $\SU_2$
      (U.~Boscain, F.~Rossi~\cite{boscain-rossi}, and independently, using another techniques by V.\,N.~Berestovskii and I.\,A.~Zubareva \cite{berestovskij-zubareva-sl2, berestovskij-zubareva-so3}),
      also C.~Autenried and I.~Markina considered some generalizations to Shtiefel manifolds~\cite{autenried-markina}.
\item Riemannian problems on the Lie groups $\SL_2(\R)$, $\PSL_2(\R)$, $\SO_3$, $\SU_2$
      (A.\,V.~Podob\-rya\-ev, Yu.\,L.~Sachkov~\cite{podobryaev-sachkov-1, podobryaev-sachkov-2}).
\item
\label{pr-se2}
Sub-Riemannian problem on the Lie group $\SE_2$ (Yu.\,L.~Sachkov~\cite{sachkov-moiseev-se-1, sachkov-se-2, sachkov-se-3}, the first paper in collaboration with I.~Moiseev).
\item
\label{pr-sh2}
Sub-Riemannian problem on the Lie group $\SH_2$ (Ya.\,A.~Butt, Yu.\,L.~Sachkov, \\ A.\,I.~Bhatti~\cite{sh2-1, sh2-2}).
\item
\label{pr-sphere-plane}
The problem of a rolling sphere on the plane without twisting and slipping \\ (Yu.\,L.~Sachkov~\cite{sachkov-sphere-plane}).

\end{enumerate}

Here we have the problems on nilpotent groups (\ref{pr-Heisenberg}--\ref{pr-Engel}), compact groups ($\SO_3$, $\SU_2$), semisimple groups ($\SL_2(\R)$, $\PSL_2(\R)$),
semidirect product of commutative and compact groups (\ref{pr-se2}, $\SE_2 = \R^2 \leftthreetimes \SO_2$),
semidirect product of commutative and nilpotent groups (\ref{pr-sh2}, $\SH_2 = \R^2 \leftthreetimes \R$),
direct product of compact and commutative groups (\ref{pr-sphere-plane}, $\SO_3 \times \R^2$).

Left-invariant optimal control problems on nilpotent Lie groups are of special interest due to  existence of a nilpotent approximation~\cite{agrachev-sarychev} of control systems.

In the problems listed above an extension of symmetries of the vertical subsystem to symmetries of the exponential map was constructed by explicit formulas for the map $\Exp$ (i.e., an explicit parametrization of extremal trajectories) or by an explicit form of the Hamiltonian system. Existence of such extension was not guaranteed a priori.

In Section~\ref{section-results} we introduce conditions for existence of extension of symmetries of the vertical subsystem to  symmetries of the exponential map. Also there is a general construction of such symmetries and some corollaries. The proof is in Section~\ref{section-proof}. We describe a non-trivial example in Section~\ref{section-example}.

\section{\label{section-results}The main result}

Let $G$ be a connected Lie group, $\g$ be its Lie algebra. Consider the cotangent bundle $T^*G$ with the action of the group $G$ by left shifts. Let $H \in C^{\infty}(T^*G)$ be a left-invariant Hamiltonian, $\Hl$ be the corresponding Hamiltonian vector field, $\Hh$ and $\Hv$ be its horizontal and vertical parts, respectively, see~(\ref{eq-hor-vert-parts}).

\begin{theorem}
\label{th-symmetry}
Let $G$ be a connected Lie group, such that generic stabilizer of the coadjoint action is connected
and has dimension not more than 1.
Assume that $H : T^*G \rightarrow \R$ is a left-invariant Hamiltonian, and an operator $\sigma^*:\g^*\rightarrow\g^*$ is such that
$\sigma^*$
preserves the Hamiltonian $H$ and there holds one of the two conditions:\\
{\rm (a)} $\sigma^*(\Hv) = \Hv$ and $\sigma$ is an automorphism of the Lie algebra $\g$;\\
{\rm (b)} $\sigma^*(\Hv) = -\Hv$ and $\sigma$ is an anti-automorphism of the Lie algebra $\g$.\\
Then the pair of diffeomorphisms $(s, S^{-1})$ is a symmetry of the exponential map, where
$$
s(p, t) = \left\{
\begin{array}{lc}
(\sigma^* p, t), & \text{in case {\rm (a)},} \\
(\sigma^* e^{t\Hv} p, t), & \text{in case {\rm (b)},} \\
\end{array}
\right.
$$
and $S: G \rightarrow G$ is the (anti-)automorphism of the Lie group such that $d_{\id}S = \sigma$.
\end{theorem}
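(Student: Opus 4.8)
The plan is to reduce the identity $\Exp\circ s=S^{-1}\circ\Exp$ to an equality of two solutions of the same ordinary differential equation on $G$, matched by uniqueness. First I record what the hypotheses give. For the vertical trajectory $p(\tau)=e^{\tau\Hv}p$ set $X(\tau):=d_{p(\tau)}H\in\g$; by Definition~\ref{def-exponential-map} and~(\ref{eq-hor-vert-parts}) the curve $g(\tau):=\Exp(p,\tau)$ is the unique solution of $g^{-1}\dot g=X(\tau)$, $g(0)=\id$. Differentiating $H\circ\sigma^*=H$ and using that $\sigma$ is the operator dual to $\sigma^*$ gives $d_{\sigma^*q}H=\sigma^{-1}(d_qH)$ for all $q\in\g^*$. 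Since $\sigma^*$ is linear, $\sigma^*(\Hv)=\pm\Hv$ means $\sigma^*\circ\Hv=\pm\,\Hv\circ\sigma^*$ as maps $\g^*\to\g^*$, hence $\sigma^*\circ e^{\tau\Hv}=e^{\pm\tau\Hv}\circ\sigma^*$. (Consistency remark: for $\sigma$ an (anti-)automorphism these sign conditions are in fact automatic from $H\circ\sigma^*=H$, via $\ad_{\sigma X}=\pm\,\sigma\,\ad_X\,\sigma^{-1}$ and passing to duals.)

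\emph{Case {\rm (a)}.} Let $S$ be the Lie group automorphism of $G$ with $d_{\id}S=\sigma$, so $d_{\id}S^{-1}=\sigma^{-1}$ and $S^{-1}\circ L_g=L_{S^{-1}(g)}\circ S^{-1}$. The vertical trajectory issued from $\sigma^*p$ is $\tau\mapsto\sigma^*p(\tau)$ by the commutation relation, with $d_{\sigma^*p(\tau)}H=\sigma^{-1}X(\tau)$, so $\Exp(\sigma^*p,\cdot)$ solves $y^{-1}\dot y=\sigma^{-1}X(\tau)$, $y(0)=\id$. Differentiating $\tau\mapsto S^{-1}(g(\tau))$ and using the relation above, this curve also starts at $\id$ with left logarithmic derivative $\sigma^{-1}X(\tau)$; by uniqueness $\Exp(\sigma^*p,\tau)=S^{-1}(\Exp(p,\tau))$, which is $\Exp\circ s=S^{-1}\circ\Exp$ for $s(p,t)=(\sigma^*p,t)$. (Equivalently and more conceptually: in the trivialization $\tau$ the cotangent lift of $S^{-1}$ is $\tau(g,p)\mapsto\tau(S^{-1}(g),\sigma^*p)$; it is a symplectomorphism, $H\circ\sigma^*=H$ makes it commute with $e^{t\Hl}$, and projecting by $\pi$ after evaluation at $\tau(\id,p)$ gives the claim.)

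\emph{Case {\rm (b)}.} Let $S$ be the Lie group anti-automorphism with $d_{\id}S=\sigma$, so $S^{-1}$ is an anti-automorphism with $d_{\id}S^{-1}=\sigma^{-1}$ and $S^{-1}\circ L_a=R_{S^{-1}(a)}\circ S^{-1}$. Fix $t_1>0$ and set $g(\tau)=\Exp(p,\tau)$. Now $\sigma^*\circ e^{\tau\Hv}=e^{-\tau\Hv}\circ\sigma^*$, so the vertical trajectory issued from $\sigma^*p(t_1)$ is the time-reversed one $\tau\mapsto\sigma^*p(t_1-\tau)$, with $d_{\sigma^*p(t_1-\tau)}H=\sigma^{-1}X(t_1-\tau)$; hence $\Exp(\sigma^*p(t_1),\cdot)$ is the solution $\tilde g$ of $\tilde g^{-1}\dot{\tilde g}=\sigma^{-1}X(t_1-\tau)$, $\tilde g(0)=\id$. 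Consider $k(\tau)=S^{-1}\bigl(g(t_1)^{-1}g(t_1-\tau)\bigr)$; then $k(0)=\id$, the curve $g(t_1)^{-1}g(t_1-\tau)$ has left logarithmic derivative $-X(t_1-\tau)$, and the anti-homomorphism property of $S^{-1}$ turns this into a \emph{right} logarithmic derivative $\dot k\,k^{-1}=-\sigma^{-1}X(t_1-\tau)$. Hence $k^{-1}$ starts at $\id$ with left logarithmic derivative $\sigma^{-1}X(t_1-\tau)$ — the same Cauchy problem as $\tilde g$ — so $\tilde g=k^{-1}$ and
\[
\Exp(\sigma^*p(t_1),t_1)=k(t_1)^{-1}=\bigl(S^{-1}(g(t_1)^{-1})\bigr)^{-1}=S^{-1}(g(t_1))=S^{-1}(\Exp(p,t_1)).
\]
As $t_1>0$ was arbitrary this is $\Exp\circ s=S^{-1}\circ\Exp$ for $s(p,t)=(\sigma^*e^{t\Hv}p,t)$.

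It remains to see that $(s,S^{-1})$ is a symmetry in the sense of Definition~\ref{def-symmetry-of-exponential-map}: $S^{-1}$ is a diffeomorphism of $G$; in case~{\rm (a)} $s$ is a linear automorphism and $\mathcal W=\g^*$ works; in case~{\rm (b)} $s$ has formal inverse $(q,t)\mapsto(e^{-t\Hv}(\sigma^*)^{-1}q,t)$, so one restricts to the open dense, $e^{t\Hv}$- and $\sigma^*$-invariant set $\mathcal W$ of covectors whose vertical trajectory extends to all negative times. The step I expect to cost the most, and the one the structural hypothesis on $G$ (generic coadjoint stabilizer connected of dimension $\le1$) is there to secure, is the opening line of each case: that the Lie algebra (anti-)automorphism $\sigma$ integrates to a genuine (anti-)automorphism $S$ of the connected — not necessarily simply connected — group $G$, and not merely of its universal cover. (Note that the hypothesis forces $\dim\mathfrak{z}(\g)\le1$, since $\mathfrak{z}(\g)$ lies inside every coadjoint stabilizer.) After that, the only delicate bookkeeping is keeping left versus right logarithmic derivatives and the inversion straight in case~{\rm (b)}.
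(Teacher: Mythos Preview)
Your argument is correct, and in fact simpler and more general than the paper's. Both proofs handle case~(a) the same way (the cotangent lift of $S$ is a symplectomorphism preserving $H$, hence commutes with the flow). For case~(b), however, the paper takes a very different route: it shows that $H\circ S^*$ is the \emph{right}-invariant Hamiltonian $H_R$, whose vertical part in right trivialization is $-\Hv$, and then proves the key identity $e^{t\Hr}\circ e^{t\Hv}p_0 = e^{t\Hl}p_0$ via symplectic reduction --- restricting to momentum level sets $J_L^{-1}(p_0)$ and $J_R^{-1}(p_t)$, building principal connections on the resulting $G_{p_0}$- and $G_{p_t}$-bundles out of the Liouville form, and matching horizontal lifts and holonomies (Lemmas~\ref{lemma-horizontal} and~\ref{lemma-addition}). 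It is precisely in this reconstruction argument that the hypothesis on the generic coadjoint stabilizer (connected, $\dim\le1$, $p_0(\g_{p_0})\ne0$) enters: the stabilizer is the structure group of the reduced bundle, and one-dimensionality is what makes the holonomy comparison tractable. Your direct ODE comparison --- apply the anti-homomorphism $S^{-1}$ to the time-reversed curve $g(t_1)^{-1}g(t_1-\tau)$, which converts the left logarithmic derivative into a right one, then invert --- bypasses all of this machinery and never invokes the stabilizer condition. So your proof actually establishes the theorem without that hypothesis.

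One correction to your closing paragraph: the stabilizer condition in the paper is \emph{not} there to guarantee that $\sigma$ integrates to a group (anti-)automorphism $S$. The existence of $S$ is simply taken as given (``$S:G\to G$ is the (anti-)automorphism such that $d_{\id}S=\sigma$''), and Remark~\ref{rem-coadjoint-orbit-condition} states explicitly that the stabilizer condition is used only for case~(b). Your argument shows that this use is an artifact of the paper's symplectic-reduction method rather than an essential obstruction.
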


\begin{remark}
\label{rem-coadjoint-orbit-condition}
The condition for generic stabilizer of the coadjoint action plays role only for symmetries of case~(b).
\end{remark}

\begin{remark}
\label{rem-case-b-is-main}
In case~(b) if $\sigma$ is an anti-automorphism, then $-\sigma$ is an automorphism and one can construct a symmetry as in case~(a).
But it is not what we want. We construct not any symmetry, but a special kind of symmetry for case~(b).
Notice that in case~(b) the curve
$$
\{\Exp \circ s (p, \tau) = \pi \circ e^{\tau\Hl} \circ \sigma^* \circ e^{\tau \Hv} (\id, p) \ | \ \tau \in \R_+ \}
$$
is not an extremal trajectory, unlike the situation in case~(a).
The segment of extremal trajectory $\{\Exp (\sigma^* e^{t \Hv} p, \tau) \ | \ \tau \in [0, t]\}$ is symmetric to the segment of initial extremal trajectory $\{\Exp (p, \tau) \ | \ \tau \in [0, t]\}$.

Usually symmetries of case~(b) are essential for construction of Maxwell strata in applications.
In examined situations there is an open subset of extremal trajectories that intersect the corresponding Maxwell stratum.
\end{remark}

Everywhere below we consider symmetries $(s, S^{-1})$ of the exponential map such that $\sigma$ satisfies the hypotheses of Theorem~\ref{th-symmetry}. We call $\sigma^*$ \emph{the symmetry of the vertical part of the Hamiltonian vector field.}

\begin{definition}
\label{def-maxwell-sets}
\emph{The Maxwell sets in the pre-image and in the image of exponential map, corresponding to the symmetry $(s, S^{-1})$,} are the sets
$$
M_{\sigma^*} = \{ (p, t_1) \in \mathcal{W}\times\R_+ \ | \ \Exp(p, t_1) = \Exp \circ s (p, t_1) \}, \qquad \mathcal{M}_{\sigma^*} = \Exp{M_{\sigma^*}},
$$
respectively.
\end{definition}

\begin{remark}
\label{rem-uni-Maxwell-set}
It follows directly from Theorem~\ref{th-symmetry} that
the Maxwell set in the image of the exponential map $\mathcal{M}_{\sigma^*}$ is a subset of the set of fixed points $G^{S}$
of the mapping $S$.
Let $F_{\sigma^*}(g) = 0$ be an equation of $G^{S}$, where $F_{\sigma^*} : G\rightarrow \R^k$, for some $k$.
This allows us to determine the Maxwell time
$t_{\mathrm{max}}^{\sigma^*}:\g^*\rightarrow\R_+\cup\{+\infty\}$, corresponding to the symmetry $\sigma^*$,
by the implicit function
$$
F_{\sigma^*}(\Exp{(p, t_{\mathrm{max}}^{\sigma^*}{(p)})}) = 0.
$$
To find the set of the first Maxwell points corresponding to symmetries we need to investigate these implicit functions for different symmetries to find the first Maxwell time.
\end{remark}

\begin{corollary}
\label{crl-two-leftinv-prob}
Assume that there are two left-invariant optimal control problems on a Lie group and a symmetry of the vertical parts of both Hamiltonian vector fields. If two extremal trajectories corresponding to these problems meet one another, then the symmetric trajectories meet one another as well with the same values of time and cost functional.
\end{corollary}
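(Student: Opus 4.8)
\medskip
\noindent\emph{Proof plan.} The plan is to read this statement off Theorem~\ref{th-symmetry}, the only real work being the bookkeeping of the cost functional. Label the data of the two problems by superscripts $1,2$: maximized Hamiltonians $H^1,H^2$, cost integrands $\varphi^1,\varphi^2$, exponential maps $\Exp^1,\Exp^2$, vertical parts $\vec H^1_{\mathrm{vert}},\vec H^2_{\mathrm{vert}}$; let $\sigma^*\colon\g^*\to\g^*$ be the operator that is a symmetry of both vertical parts. Since whether $\sigma$ is an automorphism or an anti-automorphism of $\g$ is a property of $\sigma$ alone, the two problems fall into \emph{the same} case {\rm (a)} or {\rm (b)} of Theorem~\ref{th-symmetry}, and the group (anti-)automorphism $S$ with $d_{\id}S=\sigma$ is common to both. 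Hence Theorem~\ref{th-symmetry}, applied twice, produces symmetries $(s^1,S^{-1})$ of $\Exp^1$ and $(s^2,S^{-1})$ of $\Exp^2$ with $\Exp^j\circ s^j=S^{-1}\circ\Exp^j$ on $\mathcal W^j\times\R_+$ and with one and the same $S^{-1}$.

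First I would dispose of the endpoint. Suppose an extremal trajectory of the first problem meets one of the second at time $t$, i.e.\ $\Exp^1(p_1,t)=\Exp^2(p_2,t)$ for some $p_1,p_2\in\g^*$, $t>0$; by density of $\mathcal W^j$ and continuity of $\Exp^j$ it is enough to treat $p_j\in\mathcal W^j$. Then
\[
\Exp^1\!\bigl(s^1(p_1,t)\bigr)=S^{-1}\!\bigl(\Exp^1(p_1,t)\bigr)=S^{-1}\!\bigl(\Exp^2(p_2,t)\bigr)=\Exp^2\!\bigl(s^2(p_2,t)\bigr),
\]
and the time component of $s^j$ keeps $t$ unchanged; so the two symmetric trajectories meet at the common point $S^{-1}(\Exp^1(p_1,t))$ at the common time $t$. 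It remains to compare their costs.

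The crux is that, inside each problem, the symmetry preserves the cost of a segment of extremal trajectory. Fix $j$ and drop it from the notation. Along a normal extremal $\lambda(\cdot)$ with $\lambda(0)=p\in\g^*\simeq T^*_{\id}G$, put $q(\tau)=\pi(\lambda(\tau))$ and let $p(\tau)=e^{\tau\Hv}p$ be the vertical component of $\lambda(\tau)$ in the left trivialization. Pontryagin's maximum principle together with the splitting~\eqref{eq-hor-vert-parts} gives, for a.e.\ $\tau$,
\[
\varphi\bigl(q(\tau),u(\tau)\bigr)=\bigl\langle\lambda(\tau),\dot q(\tau)\bigr\rangle-H(\lambda(\tau))=\bigl\langle p(\tau),d_{p(\tau)}H\bigr\rangle-H(p),
\]
since $\dot q(\tau)=\Hh(q(\tau),p(\tau))=dL_{q(\tau)}d_{p(\tau)}H$, the Hamiltonian $H$ is left-invariant, and $H$ is constant along the vertical flow. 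Thus the cost of $\Exp(p,\cdot)|_{[0,t]}$ equals $\int_0^t\psi(e^{\tau\Hv}p)\,d\tau$, where $\psi(p)=\langle p,d_pH\rangle-H(p)$ is a function on $\g^*$. Now $H\circ\sigma^*=H$ and linearity of $\sigma^*$ yield $d_{\sigma^*q}H=\sigma^{-1}d_qH$, whence
\[
\psi(\sigma^*q)=\bigl\langle\sigma^*q,\sigma^{-1}d_qH\bigr\rangle-H(q)=\bigl\langle q,d_qH\bigr\rangle-H(q)=\psi(q);
\]
and $\sigma^*(\Hv)=\varepsilon\Hv$ ($\varepsilon=+1$ in case {\rm (a)}, $\varepsilon=-1$ in case {\rm (b)}) gives the intertwining $e^{\tau\Hv}\circ\sigma^*=\sigma^*\circ e^{\varepsilon\tau\Hv}$. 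In case {\rm (a)} the symmetric segment is $\Exp(\sigma^*p,\cdot)|_{[0,t]}$, of cost $\int_0^t\psi(e^{\tau\Hv}\sigma^*p)\,d\tau=\int_0^t\psi(\sigma^*e^{\tau\Hv}p)\,d\tau=\int_0^t\psi(e^{\tau\Hv}p)\,d\tau$. In case {\rm (b)} it is $\Exp(\sigma^*e^{t\Hv}p,\cdot)|_{[0,t]}$ (Remark~\ref{rem-case-b-is-main}), of cost $\int_0^t\psi(e^{\tau\Hv}\sigma^*e^{t\Hv}p)\,d\tau=\int_0^t\psi(\sigma^*e^{(t-\tau)\Hv}p)\,d\tau=\int_0^t\psi(e^{(t-\tau)\Hv}p)\,d\tau$, which equals $\int_0^t\psi(e^{s\Hv}p)\,ds$ after the substitution $s=t-\tau$. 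In either case the symmetric segment carries exactly the cost of $\Exp(p,\cdot)|_{[0,t]}$.

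Combining the two steps: by hypothesis the two original trajectories meet with equal cost over $[0,t]$, so by the preceding paragraph the two symmetric trajectories have equal cost over $[0,t]$ as well; together with the common endpoint and time this is the assertion. I expect the main obstacle --- really the only one --- to be the cost computation, and inside it the case {\rm (b)} check that running the symmetric segment from the shifted covector $\sigma^*e^{t\Hv}p$, combined with the time-reversed intertwining of the vertical flow, exactly restores the original cost; the passage to $p_j\in\mathcal W^j$ by density is routine since $\Exp^j$ and the cost depend continuously on the initial covector.
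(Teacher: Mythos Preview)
Your proof is correct and follows the same route as the paper, which states only that the corollary ``immediately follows from Theorem~\ref{th-symmetry}'' without further comment. Your explicit computation showing that the symmetry preserves the cost of an extremal segment (via $\psi(p)=\langle p,d_pH\rangle-H(p)$ and the intertwining of the vertical flow) fills in a detail the paper leaves to the reader.
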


\begin{corollary}
\label{crl-Killing-geodesics}
Let $G$ be a connected compact Lie group. If a normal extremal trajectory meets a geodesic of the Killing metric, then the symmetric extremal trajectory meets the symmetric geodesic of the Killing metric at the same instant of time.
\end{corollary}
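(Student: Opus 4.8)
The plan is to realize the Killing metric as a \emph{second} left-invariant optimal control problem on $G$ carrying the same symmetry $\sigma^*$, and then to quote Corollary~\ref{crl-two-leftinv-prob}.

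First I would set up the Killing geodesic problem. Since $G$ is compact, the Killing form $B$ of $\g$ is negative definite on the semisimple part $[\g,\g]$ and vanishes on the centre; together with any $\Ad$-invariant inner product on the centre (which exists by compactness) it defines a bi-invariant Riemannian metric on $G$. The associated geodesic problem is a left-invariant optimal control problem of the form~\eqref{eq-optimal-control-problem}: take $U = \g$, dynamics $F_u(g) = dL_g u$, cost density $\varphi(g,u) = \tfrac12\langle u,u\rangle$; then the maximized Hamiltonian $H_B$ is left-invariant, and the normal extremal trajectory from $\id$ with covector $p$ is $\tau\mapsto\exp(\tau\, d_p H_B)$. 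The computational point I would record is that for a bi-invariant metric the Euler equation $\dot p = (\ad^* d_p H_B)p$ has identically vanishing right-hand side (conservation of momentum along geodesics of such a metric), so $\Hv\equiv 0$ for this problem.

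Next I would check that $\sigma^*$ is a symmetry of the vertical part of this Hamiltonian system. Every automorphism and every anti-automorphism $\sigma$ of $\g$ preserves $B$, because $\ad_{\sigma X} = \pm\,\sigma\circ\ad_X\circ\sigma^{-1}$ and the trace is conjugation-invariant; hence $\sigma^*$ preserves $H_B$. A short direct computation shows that an (anti-)automorphism preserving the Hamiltonian automatically satisfies $\sigma^*\Hv = \pm\Hv$, so $\sigma^*$ fulfils the hypotheses of Theorem~\ref{th-symmetry} for the Killing problem in exactly the same case~{\rm (a)} or~{\rm (b)} as for the original problem. Because $\Hv\equiv 0$ here, the pre-image part of the symmetry is $s(p,t) = (\sigma^* p, t)$ in both cases and the image part is the same Lie group (anti-)automorphism $S^{-1}$; in particular $S^{-1}$ carries the Killing geodesic $\tau\mapsto\exp(\tau X)$ to the Killing geodesic $\tau\mapsto S^{-1}(\exp(\tau X))$, which is by definition the symmetric geodesic of the Killing metric.

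Finally I would apply Corollary~\ref{crl-two-leftinv-prob} to the given left-invariant problem and the Killing geodesic problem, equipped with the common symmetry $\sigma^*$: if a normal extremal trajectory meets a Killing geodesic at a time $t$, the corollary yields that their symmetric images meet at the same time $t$, and by the previous paragraph these images are precisely the symmetric extremal trajectory and the symmetric Killing geodesic. The only steps that need genuine (if standard) verification are the identity $\Hv\equiv 0$ for bi-invariant metrics and the harmless matter of interpreting ``the Killing metric'' when $G$ has a nontrivial centre, both addressed above; the rest is immediate from Corollary~\ref{crl-two-leftinv-prob}.
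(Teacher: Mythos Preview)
Your argument is correct and follows the same route as the paper: the Killing geodesic problem is viewed as a second left-invariant problem with $\Hv\equiv 0$, whence $\sigma^*$ is trivially a symmetry of its vertical part, and Corollary~\ref{crl-two-leftinv-prob} closes the proof. The paper's own proof is one sentence shorter because it does not spell out why $\sigma^*$ preserves the Killing Hamiltonian; your verification via $\ad_{\sigma X}=\pm\,\sigma\circ\ad_X\circ\sigma^{-1}$ fills that gap. One minor caveat: if $G$ has nontrivial centre, an arbitrary $\Ad$-invariant inner product on the centre need not be $\sigma$-invariant, so your extension of $H_B$ to the centre is not automatically preserved by $\sigma^*$; the paper sidesteps this by speaking of ``the'' metric defined by the Killing form, which is a genuine Riemannian metric only in the semisimple case.
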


\begin{proof}
Corollary~\ref{crl-two-leftinv-prob} immediately follows from Theorem~\ref{th-symmetry}.

The vertical part of the Hamiltonian vector field is trivial for the Riemannian problem for the Killing metric (see, for example,~\cite{agrachev-sachkov}). Thus, any symmetry of the vertical part of the Hamiltonian system for the optimal control problem is a symmetry of the vertical part of the Hamiltonian vector field for the Riemannian problem for the Killing metric.
Corollary~\ref{crl-Killing-geodesics} follows from Corollary~\ref{crl-two-leftinv-prob}.
\end{proof}

\begin{remark}
For any element $g \in G$ of a compact connected Lie group there exists $\xi \in \g$ such that $g = \exp{(\xi)}$.
By definition we have $S^{-1}(g) = \exp{(\sigma^{-1}\xi)}$. This does not depend on a choice of the element $\xi$, because the (anti-)automorphism $\sigma$ preserves fibers of the map $\exp$.
For a compact Lie group $G$ one-parametric subgroups $g_{\xi}(t) = \exp{(t \xi)}$
are geodesics of the bi-invariant Riemannian metric that is defined by the Killing form~\cite{agrachev-sachkov}.
\end{remark}

\section{\label{section-proof}Proof of Theorem~\ref{th-symmetry}}

Define the diffeomorphism $S^* : T^*G \rightarrow T^*G$ as follows
$$
S^*(\lambda) = (d_{S^{-1}(g)}S)^*(\lambda), \qquad \lambda \in T^*G, \qquad \pi(\lambda) = g \in G.
$$
By definition we have
\begin{align}
\pi \circ S^* = S^{-1} \circ \pi, \label{eq-projection} \\
S^*|_{T^*_{\id}G} = \sigma^*.      \label{eq-coalgebra-restriction}
\end{align}
Moreover $S^*$ is symplectomorphism of the canonical symplectic structure $\omega$ on $T^*G$.

\begin{lemma}
\label{lemma-new-flow}
The equality $(S^*)^{-1} \circ e^{t\Hl} \circ S^* = e^{t\overrightarrow{(H \circ S^*)}}$ is satisfied.
\end{lemma}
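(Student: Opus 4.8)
The plan is to recognize Lemma~\ref{lemma-new-flow} as the standard fact that a symplectomorphism conjugates Hamiltonian flows, applied to $\Phi := S^*$. The only structural input needed is that $S^*$ is a symplectomorphism of $(T^*G, \omega)$, which is already recorded in the text above (being built from the diffeomorphism $S$ by dualizing differentials, it is the cotangent lift of $S^{-1}$, hence preserves the tautological one-form and therefore $\omega$). In particular, the hypotheses on $G$, $H$ and $\sigma$ from Theorem~\ref{th-symmetry} play no role in this lemma; they enter only in the later steps, where one checks $H \circ S^* = H$ and reconstructs $s$ from $\sigma^*$ and the vertical flow.

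First I would establish the infinitesimal version. Set $\widetilde H := H \circ S^*$; I claim $\overrightarrow{\widetilde H} = (S^*)^*\,\Hl$, where $(S^*)^*\,\Hl := d(S^*)^{-1} \circ \Hl \circ S^*$ denotes the pullback vector field. To see this I start from the defining relation $\iota_{\vec f}\,\omega = df$ and use naturality of the interior product under a diffeomorphism $\Phi$, namely $\Phi^*(\iota_Y \alpha) = \iota_{\Phi^* Y}(\Phi^* \alpha)$, to compute
$$
\iota_{\overrightarrow{\widetilde H}}\,\omega = d\widetilde H = (S^*)^*(dH) = (S^*)^*\bigl(\iota_{\Hl}\,\omega\bigr) = \iota_{(S^*)^*\Hl}\bigl((S^*)^*\omega\bigr) = \iota_{(S^*)^*\Hl}\,\omega ;
$$
non-degeneracy of $\omega$ then forces $\overrightarrow{\widetilde H} = (S^*)^*\,\Hl$.

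Next I would integrate this identity. It says exactly that $S^*$ carries integral curves of $\overrightarrow{\widetilde H}$ to integral curves of $\Hl$: if $\dot\gamma = \overrightarrow{\widetilde H}(\gamma)$, then $\frac{d}{dt}(S^* \circ \gamma) = d(S^*)\bigl(\overrightarrow{\widetilde H}(\gamma)\bigr) = \Hl(S^* \circ \gamma)$. By uniqueness of solutions of ODEs this yields $S^* \circ e^{t\overrightarrow{\widetilde H}} = e^{t\Hl} \circ S^*$ on the appropriate maximal domains, that is $(S^*)^{-1} \circ e^{t\Hl} \circ S^* = e^{t\overrightarrow{(H \circ S^*)}}$, which is the assertion.

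I do not expect a genuine obstacle here; the lemma is a routine piece of symplectic geometry. The two points that require mild care are the bookkeeping of pullback conventions (vector field versus differential form) in the computation above, and the fact that the Hamiltonian flow $e^{t\Hl}$ need not be complete: since $S^*$ is a global diffeomorphism, the maximal flow domain of $\overrightarrow{\widetilde H}$ is precisely the $S^*$-preimage of that of $\Hl$, so the conjugation identity is valid wherever either side is defined, and the equation in the lemma should be read in that sense.
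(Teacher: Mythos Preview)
Your proof is correct and follows essentially the same approach as the paper: both use that $S^*$ is a symplectomorphism to identify $\overrightarrow{H\circ S^*}$ with the pullback $(S^*)^*\Hl$ and then pass to flows. The paper merely presents the two steps in the opposite order, stating the flow-conjugation identity first (as ``obvious'') and then carrying out the infinitesimal computation.
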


\begin{proof}
Obviously $(S^*)^{-1} \circ e^{t\Hl} \circ S^* = e^{t(S^*)^{-1}\Hl}$.
Then, by definition of the Hamiltonian vector field $\Hl$, we have
$$
d(H \circ S^*) = dH \circ dS^* = \omega(dS^*(\cdot), \Hl).
$$
Since $S^*$ is a symplectomorphism, this is equal to
$\omega(\cdot, (dS^*)^{-1}\Hl)$. So, by definition of the Hamiltonian vector field $\overrightarrow{(H \circ S^*)}$ we get the statement of Lemma.
\end{proof}

Next we find out how the diffeomorphism $S^*$ acts on the Hamiltonian $H$.

\begin{lemma}
\label{lemma-case-a-keeps-Hamiltonian}
If $\sigma$ is an automorphism of the Lie algebra $\g$ and $\sigma^*$ preserves the left-invariant Hamiltonian $H$,
then $H \circ S^* = H$.
\end{lemma}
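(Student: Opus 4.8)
The plan is to compute the action of the symplectomorphism $S^*$ in the left trivialization $\tau$ and show that on the fiber coordinate it is simply $\sigma^*$; left-invariance of $H$ then finishes the proof at once. So the lemma is really a bookkeeping statement about how $S^*$ looks through $\tau$.

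First I would fix $\lambda = \tau(g, p)$ and put $h = S^{-1}(g)$, so that by~\eqref{eq-projection} we have $S^*(\lambda) \in T^*_h G$ and we may write $S^*(\lambda) = \tau(h, p')$ for a unique $p' \in \g^*$. Unwinding the definitions of $\tau$ and of $S^*$, together with the chain rule and $(d_h L_{h^{-1}})^{-1} = d_{\id}L_h$, the fiber coordinate is
$$
p' = (d_{\id}L_h)^* S^*(\lambda) = (d_{\id}L_h)^* (d_h S)^* (\lambda) = \left(d_{\id}(S \circ L_h)\right)^* (\lambda).
$$

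The one genuinely algebraic input is that $S$, being a group automorphism, intertwines left translations: $S \circ L_h = L_{S(h)} \circ S = L_g \circ S$. Differentiating at $\id$ gives $d_{\id}(S \circ L_h) = d_{\id}L_g \circ \sigma$, hence $p' = \sigma^* (d_{\id}L_g)^* (\lambda)$. Since $(d_{\id}L_g)^* \tau(g, p) = p$ (immediate from $L_{g^{-1}} \circ L_g = \id$), this yields $p' = \sigma^* p$, i.e. $S^*(\tau(g, p)) = \tau(S^{-1}(g), \sigma^* p)$.

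Finally, left-invariance of $H$ means $H(\tau(\cdot, q)) = H(q)$ for the induced function $H \in C^\infty(\g^*)$, so
$$
H(S^* \lambda) = H(\sigma^* p) = H(p) = H(\lambda),
$$
the middle equality being precisely the hypothesis that $\sigma^*$ preserves $H$ on $\g^*$. The only obstacle is careful tracking of base points and dualizations; there is nothing deep here. I note that the automorphism hypothesis is used exactly once, in $S \circ L_h = L_{S(h)} \circ S$, and that the condition $\sigma^*(\Hv) = \Hv$ plays no role in this particular lemma — it will be needed only later, when one compares the horizontal parts of the Hamiltonian vector fields $\Hl$ and $\overrightarrow{(H\circ S^*)}$.
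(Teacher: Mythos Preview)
Your proof is correct and follows essentially the same route as the paper: both arguments use the automorphism identity $S\circ L_h = L_{S(h)}\circ S$ (the paper writes it as $SL_g = L_{S(g)}S$ and dualizes) together with left-invariance of $H$, the restriction $S^*|_{T^*_{\id}G}=\sigma^*$, and the hypothesis $H\circ\sigma^* = H$. The only cosmetic difference is that you compute $S^*$ explicitly in the left trivialization, obtaining $S^*(\tau(g,p)) = \tau(S^{-1}(g),\sigma^*p)$, whereas the paper chases the pullbacks directly; the content is identical.
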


\begin{proof}
Since $S$ in an automorphism of the Lie group $G$ we have
\begin{equation}
\label{eq-automorphism-S}
S L_g = L_{S(g)} S \quad \Rightarrow \quad L_g^* S^* = S^* L_{S(g)}^* \quad \Rightarrow \quad L_{S^{-1}(g^{-1})}^* S^* = S^* L_{g^{-1}}^*.
\end{equation}

Consider the action of diffeomorphism $S^*$ on the Hamiltonian $H$:
$$
H(S^* \lambda) = H(S^* L_{g^{-1}}^* p) \stackrel{\eqref{eq-automorphism-S}}{=} H(L_{S^{-1}(g^{-1})}^* S^* p), \qquad
\lambda = L_{g^{-1}}^* p \in T^*_g G, \qquad p \in \g^*.
$$
Due to the left-invariance of the Hamiltonian and formula~\eqref{eq-coalgebra-restriction} this is equal to $H(\sigma^*p)$. But $H$ is invariant under $\sigma^*$, so we get $H(S^*\lambda) = H(p) = H(\lambda)$.
\end{proof}

As a consequence in case~(a) from Lemma~\ref{lemma-new-flow} and Lemma~\ref{lemma-case-a-keeps-Hamiltonian} we obtain
$e^{t\Hl} \circ S^* = S^* \circ e^{t\Hl}$. From equalities~\eqref{eq-projection}--\eqref{eq-coalgebra-restriction} it follows that $\Exp \circ s = S^{-1} \circ \Exp$. So, the pair $(s, S^{-1})$ is a symmetry of exponential map in case~(a).
\medskip

Consider now case~(b).

\begin{lemma}
\label{lemma-case-b-changes-Hamiltonian-to-right-invariant}
If $\sigma$ is an anti-automorphism of the Lie algebra $\g$ and $\sigma^*$ preserves the left-invariant Hamiltonian $H$,
then the function $\RH = H \circ S^*$ is right-invariant.
\end{lemma}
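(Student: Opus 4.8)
The plan is to repeat the index chase from the proof of Lemma~\ref{lemma-case-a-keeps-Hamiltonian}, replacing the intertwining of $S$ with \emph{left} shifts by an intertwining of $S$ with \emph{right} shifts; this is precisely what turns left-invariance of $H$ into right-invariance of $\RH=H\circ S^*$.

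First I would record the group-level relations satisfied by $S$. Since $S$ is an anti-automorphism of $G$, we have $S(gh)=S(h)S(g)$ and $S(g^{-1})=S(g)^{-1}$. Reading $S(gh)=S(h)S(g)$ as an equality of maps $G\to G$ gives
$$
S\circ L_g = R_{S(g)}\circ S, \qquad g\in G .
$$
Passing to cotangent lifts (with the contravariant composition rule $(f_1\circ f_2)^*=f_2^*\circ f_1^*$ already used in~\eqref{eq-automorphism-S}), this becomes $L_g^*S^*=S^*R_{S(g)}^*$, and substituting $g\mapsto S^{-1}(g^{-1})$ gives the right-shift analogue of~\eqref{eq-automorphism-S}:
$$
L_{S^{-1}(g^{-1})}^*\,S^* = S^*\,R_{g^{-1}}^*, \qquad g\in G .
$$

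Next I would evaluate $\RH$ in the right-invariant trivialization of $T^*G$. Write $\lambda=R_{g^{-1}}^*p\in T^*_gG$ for $p\in\g^*$. Using the previous display and~\eqref{eq-coalgebra-restriction},
$$
\RH(\lambda)=H\bigl(S^*R_{g^{-1}}^*p\bigr)=H\bigl(L_{S^{-1}(g^{-1})}^*S^*p\bigr)=H\bigl(L_{S^{-1}(g^{-1})}^*\sigma^*p\bigr).
$$
Since $S^{-1}$ is an anti-automorphism, $(S^{-1}(g^{-1}))^{-1}=S^{-1}(g)$, so $L_{S^{-1}(g^{-1})}^*\sigma^*p=\tau(S^{-1}(g),\sigma^*p)$; left-invariance of $H$ then removes the group argument and $\sigma^*$-invariance of $H$ removes $\sigma^*$, hence $\RH(\lambda)=H(\sigma^*p)=H(p)$. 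Thus $\RH(R_{g^{-1}}^*p)$ is independent of $g\in G$, which is exactly right-invariance of $\RH$.

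No step here is genuinely difficult; the one point to watch is the bookkeeping with inverses — choosing the substitution $g\mapsto S^{-1}(g^{-1})$ so that $R_{S(g)}^*$ collapses to $R_{g^{-1}}^*$, and using $S(g^{-1})=S(g)^{-1}$ to line up the inverses inside $\tau$. Note that the hypothesis of Theorem~\ref{th-symmetry} on the generic coadjoint stabilizer plays no role in this lemma (cf. Remark~\ref{rem-coadjoint-orbit-condition}); it enters only afterwards, when relating the flow of the right-invariant Hamiltonian $\overrightarrow{\RH}$ to the flow of $\Hl$ in order to complete case~(b).
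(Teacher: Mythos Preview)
Your proof is correct and follows essentially the same approach as the paper's: derive the anti-automorphism intertwining $L_{S^{-1}(g^{-1})}^*S^*=S^*R_{g^{-1}}^*$, write $\lambda=R_{g^{-1}}^*p$, and then use left-invariance of $H$ together with $\sigma^*$-invariance to obtain $\RH(R_{g^{-1}}^*p)=H(p)$. The only cosmetic difference is that you apply~\eqref{eq-coalgebra-restriction} before invoking left-invariance, whereas the paper does these two steps in the opposite order.
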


\begin{proof}
Since $S$ is anti-automorphism of the Lie group $G$,
instead of formula~\eqref{eq-automorphism-S} we get
\begin{equation}
\label{eq-anti-automorphism-S}
S L_g = R_{S(g)} S \quad \Rightarrow \quad L_g^* S^* = S^* R_{S(g)}^* \quad \Rightarrow \quad L_{S^{-1}(g^{-1})}^* S^* = S^* R_{g^{-1}}^*,
\end{equation}
where $R_g$ denotes a right-shift by element $g \in G$.

Next, for $\lambda \in T^*_gG$ there exists $p \in \g^*$ such that $\lambda = R_{g^{-1}}^* p$. Then
$$
\RH(\lambda) = H(S^* \lambda) = H(S^* R_{g^{-1}}^* p) \stackrel{\eqref{eq-anti-automorphism-S}}{=} H(L_{S^{-1}(g^{-1})}^* S^* p).
$$
Due to the left-invariance of the Hamiltonian $H$ and formula~\eqref{eq-coalgebra-restriction}
this is equal to $H(\sigma^* p) = H(p)$ (since Hamiltonian $H$ is invariant under $\sigma^*$).
So, $\RH(R_{g^{-1}}^* p) = H(p)$.
\end{proof}

\begin{lemma}
\label{lemma-right-Hamiltonian-vector-field-vertical-part}
The Hamiltonian vector field $\Hr$ is right-invariant and its vertical part in right trivialization of the cotangent bundle is equal to $-\Hv$.
\end{lemma}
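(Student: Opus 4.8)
The plan is to mirror, on the right, the left-invariant decomposition~\eqref{eq-hor-vert-parts}. First I would record two facts that are already contained in what precedes. Since the cotangent lift of a right translation $R_g$ is a symplectomorphism of $(T^*G,\omega)$, and $\RH = H\circ S^*$ is invariant under all of these lifts by Lemma~\ref{lemma-case-b-changes-Hamiltonian-to-right-invariant}, the Hamiltonian vector field $\Hr = \overrightarrow{\RH}$ is right-invariant. Moreover, putting $g = \id$ in the identity $\RH(R_{g^{-1}}^* p) = H(p)$ established in the proof of that lemma shows that $\RH$ and $H$ restrict to the same function on $\g^* = T^*_{\id}G$; in particular $d_p\RH = d_p H$ for every $p\in\g^*$.

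Next I would write out the horizontal--vertical splitting of a right-invariant Hamiltonian vector field in the right trivialization $\tau_R:G\times\g^*\to T^*G$, $\tau_R(g,p) = dR_{g^{-1}}^*(p)$. Repeating verbatim the computation that produces~\eqref{eq-hor-vert-parts}, one gets
\begin{equation*}
\Hr(\tau_R(g,p)) = d_{(g,p)}\tau_R\bigl(dR_g\,d_p\RH - (\ad^* d_p\RH)\,p\bigr),
\end{equation*}
the only difference being the minus sign in front of the vertical ($\g^*$-)component. This sign is the one substantive point of the lemma: it reflects that the right-invariant extension $\xi^{\mathrm r}$ of $\xi\in\g$ satisfies $[\xi^{\mathrm r},\eta^{\mathrm r}] = -[\xi,\eta]^{\mathrm r}$, equivalently that the cotangent-lifted right action carries the negative of the momentum map of the left action, so that the reduced (Euler) equation of a right-invariant Hamiltonian reads $\dot p = -(\ad^* d_p\RH)\,p$. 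One could alternatively derive the whole decomposition by pulling~\eqref{eq-hor-vert-parts} back along the inversion $g\mapsto g^{-1}$, which intertwines left and right translations and has differential $-\id_\g$ at $\id$.

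Combining the two paragraphs, the vertical part of $\Hr$ in the right trivialization is the vector field $p\mapsto -(\ad^* d_p\RH)\,p = -(\ad^* d_p H)\,p = -\Hv(p)$ on $\g^*$, which is the claim. I expect the only real difficulty to be sign and convention bookkeeping: pinning down the meaning of the vertical part in the right trivialization (the $\g^*$-component under $d\tau_R$), fixing the sign convention for $\ad^*$ so that~\eqref{eq-hor-vert-parts} holds as printed, and checking that the cotangent lift of $R_g$ is the expected symplectomorphism. None of this is deep, but it all has to stay consistent with the left-invariant conventions already in force.
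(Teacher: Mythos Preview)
Your proof is correct but follows a different route from the paper's. The paper argues in two lines: from the proof of Lemma~\ref{lemma-new-flow} one has $\Hr = (S^*)^{-1}\Hl$, and since $S^*|_{\g^*} = \sigma^*$ (with $S$ an anti-automorphism, so that $S^*$ intertwines the left and right trivializations), the vertical part of $\Hr$ in the right trivialization is $(\sigma^*)^{-1}\Hv$, which equals $-\Hv$ by the standing hypothesis $\sigma^*\Hv = -\Hv$ of case~(b). Your argument instead bypasses $\sigma^*$ entirely: you note that $\RH|_{\g^*} = H|_{\g^*}$ and then invoke the standard right-invariant Lie--Poisson decomposition, whose vertical part is $p\mapsto -(\ad^* d_p\RH)p$. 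This is more self-contained and shows, incidentally, that the lemma does not actually need the hypothesis $\sigma^*\Hv = -\Hv$; it needs only Lemma~\ref{lemma-case-b-changes-Hamiltonian-to-right-invariant}. The paper's route is shorter given the machinery already in place, but it leaves implicit the verification that $(S^*)^{-1}$ carries the left-trivialized vertical component to the right-trivialized one; your approach trades that implicit step for the (equally standard) sign computation in the right-invariant splitting.
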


\begin{proof}
Since $\RH$ is right-invariant, then $\Hr$ is right-invariant as well.
It follows from the proof of Lemma~\ref{lemma-new-flow} that $\Hr = (S^*)^{-1} \Hl$.
Then its vertical part equals $(\sigma^*)^{-1} \Hv = -\Hv$ (due to the condition of case~(b)).
\end{proof}

Next we use the following notation. For $p \in \g^*$ denote by $G_p = \{ g \in G \ | \ (\Ad^*{g})p = p\}$ the stabilizer
of the covector $p$ with respect to the coadjoint action. Let $\g_p$ be the Lie algebra of the stabilizer $G_p$.

\begin{proposition}
\label{prop-flows-of-left-and-right-Hamiltonian-vector-fields}
Assume that $p_0 \in \g^*$ is such that the stabilizer $G_{p_0}$ is connected, $\dim{G_{p_0}} \leqslant 1$ and
$p_0(\g_{p_0}) \neq 0$.
Then
$$
e^{t\Hr} \circ e^{t\Hv} p_0 = e^{t\Hl} p_0 \quad \text{for all} \quad t \in \R_+.
$$
\end{proposition}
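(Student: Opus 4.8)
The plan is to rewrite both sides in the left and right trivializations of $T^{*}G$ and to reduce the claimed identity to two elementary facts about the base curve of the left-invariant flow.

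Besides the left trivialization $\tau(g,p)=dL_{g^{-1}}^{*}p$ I use the right trivialization $\tau_{\mathrm r}(g,p)=dR_{g^{-1}}^{*}p$. Since $R_{g^{-1}}\circ L_{g}$ is conjugation by $g$, a one-line computation gives the transition rule $\tau(g,p)=\tau_{\mathrm r}(g,(\Ad^{*}g)\,p)$, where $\Ad^{*}g=(\Ad_{g^{-1}})^{*}$ is the coadjoint representation. By the triangular structure \eqref{eq-hor-vert-parts}, $e^{t\Hl}p_{0}=\tau(g(t),p(t))$, where $p(t)=e^{t\Hv}p_{0}$ solves the vertical subsystem and $g(\cdot)$ solves $\dot g=dL_{g}\,d_{p(s)}H$, $g(0)=\id$. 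Dually, $\Hr$ is right-invariant with $\RH|_{\g^{*}}=H$ (see the proof of Lemma~\ref{lemma-case-b-changes-Hamiltonian-to-right-invariant}), its vertical part in the right trivialization is $-\Hv$ (Lemma~\ref{lemma-right-Hamiltonian-vector-field-vertical-part}), and its horizontal part is $dR_{g}\,d_{p}H$, the right-hand analogue of \eqref{eq-hor-vert-parts}; hence $e^{t\Hr}q_{0}=\tau_{\mathrm r}(\gamma(t),e^{-t\Hv}q_{0})$, where $\gamma(\cdot)$ solves $\dot\gamma=dR_{\gamma}\,d_{q(s)}H$, $q(s)=e^{-s\Hv}q_{0}$, $\gamma(0)=\id$.

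Now fix $t$ and apply the second formula with $q_{0}=e^{t\Hv}p_{0}=p(t)$. Then $q(s)=e^{-s\Hv}p(t)=p(t-s)$, so the fibre component at time $t$ is $e^{-t\Hv}p(t)=p_{0}$, and $\gamma$ solves $\dot\gamma=dR_{\gamma}\,d_{p(t-s)}H$. A direct differentiation shows that $s\mapsto g(t-s)^{-1}g(t)$ satisfies this same equation with value $\id$ at $s=0$, so by uniqueness for the group equation $\gamma(t)=g(0)^{-1}g(t)=g(t)$, whence $e^{t\Hr}\circ e^{t\Hv}p_{0}=\tau_{\mathrm r}(g(t),p_{0})$. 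On the other hand, by the transition rule $e^{t\Hl}p_{0}=\tau(g(t),p(t))=\tau_{\mathrm r}(g(t),(\Ad^{*}g(t))\,p(t))$, and $(\Ad^{*}g(t))\,p(t)=p_{0}$ for all $t$: differentiating this quantity and using $g^{-1}\dot g=d_{p}H$ together with $\dot p=\Hv(p)=(\ad^{*}d_{p}H)\,p$ gives derivative zero, while its value at $t=0$ is $p_{0}$. Comparing the two expressions proves the proposition.

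I expect the real work to be twofold. First, keeping the $\Ad$ versus $\Ad^{*}$ and $g$ versus $g^{-1}$ bookkeeping consistent in the transition rule, in the time-reversal identity for $g(\cdot)$, and in the momentum computation. Second, locating precisely where the hypotheses on $G_{p_{0}}$ (connected, $\dim\le 1$, $p_{0}(\g_{p_{0}})\neq 0$) are actually used: the pointwise identity above does not obviously need them, so I would look for the subtlety in the global behaviour of the flows $e^{\pm t\Hv}$ along the coadjoint orbit of $p_{0}$ and in the requirement, from Definition~\ref{def-symmetry-of-exponential-map}, that the induced map be a diffeomorphism of a dense set — presumably the real reason these conditions enter. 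If a genuine gap hides in the argument, it is most likely there.
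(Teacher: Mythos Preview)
Your argument is correct and takes a genuinely different route from the paper. The paper proves the identity by symplectic reduction and reconstruction: it regards the $\Hl$- and $\Hr$-trajectories as curves in the principal $G_{p_0}$-bundle $J_L^{-1}(p_0)$ and the $G_{p_t}$-bundle $J_R^{-1}(p_t)$ over the common coadjoint orbit, builds explicit principal connections $A_L,A_R$ out of the Liouville form, and reconstructs each curve as a horizontal lift times a vertical correction in the structure group (Lemmas~\ref{lemma-horizontal} and~\ref{lemma-addition}). That machinery genuinely requires the stabilizer to be connected and one-dimensional (so that holonomies and the corrections $h_L,h_R$ reduce to exponentials of scalar integrals over $[0,t]$) and requires $p_0(\g_{p_0})\neq0$ (division by $p_0(\xi_L)$ in the definition of $A_L,A_R$). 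Your computation in the two trivializations, together with the time-reversal identity $\gamma(s)=g(t-s)^{-1}g(t)$ and the conservation of $J_L$ along the left-invariant flow (your formula $(\Ad^{*}g(t))\,p(t)=p_0$), bypasses all of this and uses nothing beyond uniqueness for the group ODE. In particular your proof shows that the conclusion holds for \emph{every} $p_0\in\g^{*}$: the hypotheses on $G_{p_0}$ are artifacts of the paper's method, not of the result itself. Your closing worry is therefore misplaced at the level of this proposition; in the paper those hypotheses enter only to make the reconstruction argument run, and in the final paragraph of the proof of Theorem~\ref{th-symmetry} they serve merely to carve out the open dense set $\mathcal{W}$ on which the symmetry $s$ is declared.
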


\begin{proof}
Define the curves
\begin{align*}
p & : [0, t] \rightarrow \g^*, \qquad p_{\tau} = e^{\tau\Hv}p_0, \\
p^- & : [0, t] \rightarrow \g^*, \qquad p^-_{\tau} = e^{-\tau\Hv}p_t = p_{t-\tau}.
\end{align*}
Denote by
\begin{align*}
&J_L : T^*G \rightarrow \g^*, \qquad J_L(\lambda) = R_g^*(\lambda), \qquad \lambda \in T^*G, \\
&J_R : T^*G \rightarrow \g^*, \qquad J_R(\lambda) = L_g^*(\lambda), \qquad \lambda \in T^*G,
\end{align*}
the momenta maps of the left and right actions of the Lie group $G$ on $T^*G$, respectively.

From symplectic reduction theory~\cite{marsden-ratiu} it is known that
\begin{enumerate}[label=\arabic*.,ref=\arabic*]
\item \label{it-left-trajectory}
      The trajectory of the left-invariant Hamiltonian vector field $\Hl$ passing through the point $p_0$
      is contained in the level-set $J_L^{-1}(p_0)$.
\item \label{it-right-trajectory}
      The trajectory of the right-invariant Hamiltonian vector field $\Hr$ passing through the point $p_t$
      is contained in the level-set $J_R^{-1}(p_t)$.
\item \label{it-left-bundle}
      The manifold $J_L^{-1}(p_0)$ is a bundle over the coadjoint orbit $(\Ad^*{G}) p_0$.
      The projection map is $J_R : J_L^{-1}(p_0) \rightarrow (\Ad^*{G}) p_0$
      and the fibers are the orbits of the left action of the group $G_{p_0}$.
\item \label{it-right-bundle}
      The manifold $J_R^{-1}(p_t)$ is a bundle over the coadjoint orbit $(\Ad^*{G}) p_t$.
      The projection map is $J_L : J_R^{-1}(p_0) \rightarrow (\Ad^*{G}) p_t$
      and the fibers are the orbits of the right action of the group $G_{p_t}$.
\item \label{it-projection-of-Hamiltonian-vector-fields}
      The projections $J_R$ and $J_L$ take the left- and right-invariant Hamiltonian vector fields to their vertical parts:
      $J_R \Hl = \Hv$, $J_L \Hr = -\Hv$.
\item \label{it-coadjoint-orbits-coincide}
      If $g_L(t) = \pi e^{t\Hl} p_0$, then $(\Ad^* g_L(t)^{-1}) p_0 = p_t$
      (it follows from~\ref{it-left-bundle} and \ref{it-projection-of-Hamiltonian-vector-fields}).
      So, the coadjoint orbits of $p_0$ and $p_t$ coincide one with another.
      Denote this coadjoint orbit by $\mathcal{O}$.
\item \label{it-left-right-intersection}
      The intersection $\Gamma = J_L^{-1}(p_0) \cap J_R^{-1}(p_t)$ is an orbit of the left action of the group $G_{p_0}$
      and at the same time is an orbit of the right action of the group $G_{p_t}$. For any $f \in \pi(\Gamma)$ we have
      $G_{p_t} = f^{-1}G_{p_0}f$.
\end{enumerate}

It follows from~\ref{it-left-trajectory}, \ref{it-right-trajectory}, \ref{it-left-right-intersection} and the assumption of
Proposition~\ref{prop-flows-of-left-and-right-Hamiltonian-vector-fields} that the trajectories of the Hamiltonian vector fields $\Hl$ and $\Hr$ passing through $p_0$ and $p_t$ (respectively) come at time $t$ to the submanifold $\Gamma$ that is connected.
We need to show that these trajectories intersect at time $t$. This is obvious if $\dim{G_{p_0}} = 0$, since submanifold $\Gamma$ is a point. So, assume that $\dim{G_{p_0}} = 1$.

We reconstruct~\cite{marsden-mongomery-ratiu} the trajectories of the Hamiltonian vector fields $\Hl$ and $\Hr$ from the trajectories of the vector fields $\Hv$ and $-\Hv$ at the coadjoint orbit $\mathcal{O}$.

Consider the restrictions of the bundles $J_L^{-1}(p_0)$ and $J_R^{-1}(p_t)$ to the curves
$p([0, t])$ and $p^-([0, t])$ at the coadjoint orbit $\mathcal{O}$.
$$
\rho_L : \mathcal{P}_L = J_L^{-1}(p_0)|_{p([0, t])} \rightarrow p([0, t]), \qquad
\rho_R : \mathcal{P}_R = J_R^{-1}(p_t)|_{p^-([0, t])} \rightarrow p^-([0, t]).
$$

\begin{definition}
Let $E \rightarrow B$ be a principal $G$-bundle. \emph{A principal connection} is a $\g$-valued one-form $A$ on $E$ such that \\
(1) $A(\xi_*) = \xi \in \g$, where $\xi_*$ is a velocity field of $G$-action,\\
(2) $A T_g = (\Ad{g}) A$, where an element $g \in G$ acts by a diffeomorphism $T_g : E \rightarrow E$.\\
A curve $\gamma : [0, t] \rightarrow E$ is called \emph{a horizontal curve} if $A\dot{\gamma}(t) \equiv 0$.
\end{definition}

We make a reconstruction of the curves
\begin{align*}
&\lambda_L : [0, t] \rightarrow \mathcal{P}_L, \qquad \lambda_L(\tau) = e^{\tau\Hl} p_0, \qquad \ \tau \in [0, 1], \\
&\lambda_R : [0, t] \rightarrow \mathcal{P}_R, \qquad \lambda_R(\tau) = e^{\tau\Hr} p_t, \qquad \tau \in [0, 1]
\end{align*}
from the curves $p$, $p^-$ in two steps.

First, we introduce principal connections $A_L$ and $A_R$ on the bundles $\mathcal{P}_L$ and $\mathcal{P}_R$ (respectively)
and find horizontal curves (in sense of these connections)
\begin{align}
&\gamma_L : [0, t] \rightarrow \mathcal{P}_L, \qquad \rho_L \circ \gamma_L = p, \qquad \ \ \gamma_L(0) = p_0, &\label{eq-horizontal-curve-L} \\
&\gamma_R : [0, t] \rightarrow \mathcal{P}_R, \qquad \rho_R \circ \gamma_R = p^-, \qquad \gamma_R(0) = p_t. &\label{eq-horizontal-curve-R}
\end{align}

Second, we look for curves
\begin{align}
&h_L : [0, t] \rightarrow G_{p_0} \ \text{such that} \ \lambda_L(\tau) = L(h_L(\tau)) \gamma_L(\tau) \ \text{for} \ \tau \in [0, t], &\label{eq-addition-L} \\
&h_R : [0, t] \rightarrow G_{p_t} \ \text{such that} \ \lambda_R(\tau) = R(h_R(\tau)) \gamma_R(\tau) \ \text{for} \ \tau \in [0, t], &\label{eq-addition-R}
\end{align}
where by $L$ and $R$ we denote the left- and the right- actions of the groups $G_{p_0}$ and $G_{p_t}$ (respectively) on the bundles
$\mathcal{P}_L$ and $\mathcal{P}_R$.

Now Proposition~\ref{prop-flows-of-left-and-right-Hamiltonian-vector-fields} immediately follows from Lemmas~\ref{lemma-horizontal}, \ref{lemma-addition}.
\end{proof}

\begin{lemma}
\label{lemma-horizontal}
There exist principal connections $A_L$ and $A_R$ on the bundles $\mathcal{P}_L$ and $\mathcal{P}_R$ (respectively) such that for corresponding horizontal curves we have $\gamma_L(t) = \gamma_R(t)$.
\end{lemma}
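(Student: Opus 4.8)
The plan is to take $A_L$ and $A_R$ to be suitable normalisations of the restriction of the tautological $1$-form $\theta$ of $T^{*}G$ (where $\theta_{\lambda}(v)=\lambda(d\pi\,v)$), and then to recover $\gamma_R$ from $\gamma_L$ by reversing time on the group. By the reduction already carried out in the proof of Proposition~\ref{prop-flows-of-left-and-right-Hamiltonian-vector-fields} we may assume $\dim G_{p_{0}}=1$. Fix a generator $X$ of $\g_{p_{0}}$; the hypothesis $p_{0}(\g_{p_{0}})\neq 0$ reads $p_{0}(X)\neq 0$, and I set
\[
A_{L}=\frac{1}{p_{0}(X)}\,\bigl(\theta|_{\mathcal P_{L}}\bigr)\otimes X .
\]
Identifying $J_{L}^{-1}(p_{0})$ with $G$ via $g\mapsto dR_{g^{-1}}^{*}p_{0}$ (along which $\pi$ becomes the identity, the fibrewise $G_{p_{0}}$-action of item~\ref{it-left-bundle} becomes left translation, and $\theta$ becomes the right-invariant $1$-form with value $p_{0}$ at $\id$), one sees that $\theta$ takes the constant value $p_{0}(X)$ on the fundamental field of the $G_{p_{0}}$-action; this gives property~(1) of a principal connection and the transversality of $\ker\theta$ to the fibres, while property~(2) holds because $G_{p_{0}}$ is connected, hence abelian, and coadjoint-fixes $p_{0}$, so it leaves $\theta|_{\mathcal P_{L}}$ invariant. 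Since the function $p\mapsto p|_{\g_{p}}$ is constant along the coadjoint orbit $\mathcal O$ (for $p'=\Ad^{*}(g)p$ one has $\g_{p'}=\Ad(g)\g_{p}$ and the pairing is unchanged) and $p_{t}\in\mathcal O$, we also have $p_{t}(\g_{p_{t}})\neq 0$; fixing a generator $Y$ of $\g_{p_{t}}$ and using $J_{R}^{-1}(p_{t})\cong G$, $g\mapsto dL_{g^{-1}}^{*}p_{t}$, with its right $G_{p_{t}}$-action, the same recipe yields a principal connection $A_{R}=p_{t}(Y)^{-1}\bigl(\theta|_{\mathcal P_{R}}\bigr)\otimes Y$.

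Next I would translate the horizontal-lift equations down to $G$. Set $\widetilde g_{L}=\pi\circ\gamma_{L}$, $\widetilde g_{R}=\pi\circ\gamma_{R}$, so that $\gamma_{L}(\tau)=dR_{\widetilde g_{L}(\tau)^{-1}}^{*}p_{0}$ and $\gamma_{R}(\tau)=dL_{\widetilde g_{R}(\tau)^{-1}}^{*}p_{t}$ with $\widetilde g_{L}(0)=\widetilde g_{R}(0)=\id$. From $\theta(\dot\gamma)=\gamma\bigl(d\pi\,\dot\gamma\bigr)$ we get that $A_{L}\dot\gamma_{L}\equiv 0$ is equivalent to $p_{0}\bigl(dR_{\widetilde g_{L}^{-1}}\dot{\widetilde g}_{L}\bigr)\equiv 0$ and $A_{R}\dot\gamma_{R}\equiv 0$ to $p_{t}\bigl(dL_{\widetilde g_{R}^{-1}}\dot{\widetilde g}_{R}\bigr)\equiv 0$, whereas the base conditions $\rho_{L}\circ\gamma_{L}=p$ and $\rho_{R}\circ\gamma_{R}=p^{-}$ become $\Ad^{*}(\widetilde g_{L}(\tau)^{-1})p_{0}=p_{\tau}$ and $\Ad^{*}(\widetilde g_{R}(\tau))p_{t}=p_{t-\tau}$, in the notation of item~\ref{it-coadjoint-orbits-coincide}. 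Each of these two lists of conditions pins down its curve uniquely, as it is the $\pi$-image of the (unique) horizontal lift.

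The key step is a time-reversal: the curve $\beta(\tau)=\widetilde g_{L}(t-\tau)^{-1}\,\widetilde g_{L}(t)$, $\tau\in[0,t]$, satisfies the conditions characterising $\widetilde g_{R}$. Indeed $\beta(0)=\id$; from the base condition at $\tau=t$, namely $\Ad^{*}(\widetilde g_{L}(t))p_{t}=p_{0}$, one gets $\Ad^{*}(\beta(\tau))p_{t}=\Ad^{*}(\widetilde g_{L}(t-\tau)^{-1})p_{0}=p_{t-\tau}$; and the identity $dL_{\beta(\tau)^{-1}}\dot\beta(\tau)=\Ad(\widetilde g_{L}(t)^{-1})\bigl(dR_{\widetilde g_{L}(t-\tau)^{-1}}\dot{\widetilde g}_{L}(t-\tau)\bigr)$, combined with $\Ad(\widetilde g_{L}(t)^{-1})^{*}p_{t}=\Ad^{*}(\widetilde g_{L}(t))p_{t}=p_{0}$, shows that $p_{t}\bigl(dL_{\beta^{-1}}\dot\beta\bigr)\equiv 0$ follows from the horizontality of $\gamma_{L}$. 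Hence $\widetilde g_{R}=\beta$ by the uniqueness above, so $\widetilde g_{R}(t)=\beta(t)=\widetilde g_{L}(t)$. Finally, $\gamma_{L}(t)$ and $\gamma_{R}(t)$ both lie in $\Gamma=J_{L}^{-1}(p_{0})\cap J_{R}^{-1}(p_{t})$ (since $\rho_{L}\gamma_{L}$ ends at $p_{t}$ and $\rho_{R}\gamma_{R}$ ends at $p_{0}$), and $\pi|_{\Gamma}$ is injective because $\Gamma$ is a single $G_{p_{0}}$-coset inside $J_{L}^{-1}(p_{0})\cong G$ (item~\ref{it-left-right-intersection}); as $\pi\gamma_{L}(t)=\widetilde g_{L}(t)=\widetilde g_{R}(t)=\pi\gamma_{R}(t)$, we conclude $\gamma_{L}(t)=\gamma_{R}(t)$.

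The main obstacle, I expect, is the first step: verifying that $A_{L}$ and $A_{R}$ are bona fide principal connections — equivariance under the structure group and transversality of $\ker\theta$ to the fibres. This is precisely where the hypotheses of Theorem~\ref{th-symmetry} are used: connectedness of the generic stabiliser forces it to be abelian, so equivariance reduces to the $G_{p_{0}}$-invariance of $\theta|_{\mathcal P_{L}}$ (which holds since $G_{p_{0}}$ coadjoint-fixes $p_{0}$); the bound $\dim\leqslant 1$ lets the scalar form $\theta|_{\mathcal P_{L}}$ carry a $\g_{p_{0}}$-valued connection; and $p_{0}(\g_{p_{0}})\neq 0$ is exactly the non-degeneracy that makes $\ker\theta$ transverse to the fibre — without it $\theta$ is not a connection form at all. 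The time-reversal identity in the third step is then a routine computation with left and right logarithmic derivatives, with conventions as above.
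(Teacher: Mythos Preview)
Your argument is correct and uses the same principal connections as the paper (scalar multiples of the tautological form $\theta$, tensored with a generator of the one-dimensional stabiliser algebra), but the route to $\gamma_L(t)=\gamma_R(t)$ is genuinely different. The paper does not compute the horizontal lifts directly: it first fixes an arbitrary lift $c_L$ of $p$ in $\mathcal P_L$, then manufactures a matching lift $c_R$ of $p^-$ in $\mathcal P_R$ with $c_R(t)=c_L(t)$ and $\theta(\dot c_R(\tau))=\theta(\dot c_L(t-\tau))$ (achieved by correcting an initial guess $C_R$ along the fibre), and finally compares the holonomies $q_L,q_R$ that take $c_L,c_R$ to $\gamma_L,\gamma_R$, obtaining $q_R=f^{-1}q_L^{-1}f$ and hence $\pi(\gamma_R(t))=\pi(\gamma_L(t))$. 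Your approach bypasses the auxiliary curves and holonomy bookkeeping entirely: the single observation that $\beta(\tau)=\widetilde g_L(t-\tau)^{-1}\widetilde g_L(t)$ satisfies both the base condition and the $A_R$-horizontality (via the logarithmic-derivative identity and $\Ad^*(\widetilde g_L(t))p_t=p_0$) pins it down as $\widetilde g_R$ by uniqueness of horizontal lifts, and $\beta(t)=\widetilde g_L(t)$ is immediate. Your argument is shorter and isolates the underlying time-reversal symmetry cleanly; the paper's construction is more laborious but makes the holonomy integrals explicit in a form that is reused verbatim in the proof of the next lemma (the computation of $h_L(t)$ and $h_R(t)$).
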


\begin{proof}
Let $\theta$ be the canonical Liouville one-form on $T^*G$.
Consider a curve $c_L : [0, t] \rightarrow \mathcal{P}_L$, such that $c_L(0) = p_0$
and $\rho_L \circ c_L = p$. Let $f = \pi(c_L(t))$.

Now we construct the curve $c_R : [0, t] \rightarrow \mathcal{P}_R$ such that $\rho_R \circ c_R = p^-$ and with the following properties:
\begin{equation}
\label{eq-cR-conditions}
c_R(0) = p_t, \qquad c_R(t) = c_L(t) \in \Gamma, \qquad \theta(\dot{c}_R(\tau)) = \theta(\dot{c}_L(t-\tau)), \qquad \tau \in [0, t].
\end{equation}

First, consider a curve $C_R : [0, t] \rightarrow \mathcal{P}_R$ such that
\begin{equation}
\label{eq-cR-boundary-conditions}
C_R(0) = p_t, \quad C_R(t) = c_L(t), \quad
d\pi\dot{C}_R(0) = L_{f^{-1}} d\pi \dot{c}_L(t), \quad d\pi \dot{C}_R(t) = R_f d\pi \dot{c}_L(0).
\end{equation}

Second, consider $\Delta_{\tau} = \theta(\dot{C}_R(\tau)) - \theta(\dot{c}_L(t-\tau)) \in \R$. It follows from~\eqref{eq-cR-boundary-conditions} that $\Delta_0 = \Delta_t = 0$.
Define the family of elements $d_{\tau} = \exp{\bigl(\frac{1}{p_t(\xi_R)} \Delta_{\tau} \xi_R\bigr)} \in G_{p_t}$,
where $\sspan{\xi_R} = \g_{p_t}$.
Notice, that
\begin{equation}
\label{eq-connections-factor-coinscide}
p_t(\xi_R) = (\Ad^*{f^{-1}}) p_0 ((\Ad{f^{-1}})\xi_L) = p_0(\xi_L) \neq 0.
\end{equation}
Consider the curve $c_R(\tau) = R(d_{\tau}) C_R(\tau)$.
We obtain
$$
\theta(\dot{c}_R(\tau)) = \theta(R(d_{\tau})\dot{C}_R(\tau)) - \frac{1}{p_t(\xi_R)} \Delta_{\tau} \theta(L_{\pi(C_R(\tau))} \xi_R).
$$
Since $\theta(R(d_{\tau})\dot{C}_R(\tau)) = \theta(\dot{C}_R(\tau))$ and
$\theta(L_{\pi(C_R(\tau))} \xi_R) = (\Ad^*{\pi(C_R(\tau))^{-1}}) p_{t-\tau} (\xi_R) = p_t(\xi_R)$, we get
$\theta(\dot{c}_R(\tau)) = \theta(\dot{C}_R(\tau)) - \Delta_{\tau} = \theta(\dot{c}_L(t-\tau))$.
So, since $d_0 = d_t = \id$, the curve $c_R$ satisfies the conditions~\eqref{eq-cR-conditions}.

Let $\g_{p_0} = \sspan{\xi_L}$, where $\xi_L = (\Ad{f})\xi_R \in \g_{p_0}$.
Define the principal connections on the bundles
$\mathcal{P}_L$, $\mathcal{P}_R$ as
\begin{equation}
\label{eq-connections}
A_L = \frac{1}{p_0(\xi_L)} \theta \otimes \xi_L, \qquad A_R = -\frac{1}{p_t(\xi_R)} \theta \otimes \xi_R.
\end{equation}
(We have the sign minus in definition of $A_R$ because the projection $d\pi$ of the velocity field of the right $G_{p_t}$-action is a left-shift of the vector $-\xi_R$.)
Notice, that the factors of these connections coincide one with another~\eqref{eq-connections-factor-coinscide}.

Let $q_L \in G_{p_0}$ be such that $\gamma_L(t) = L(q_L) c_L(t)$. Let $q_R \in G_{p_t}$ be such that $\gamma_R(t) = R(q_R) c_R(t)$. We have
$$
q_L = \exp{\biggl(\int\limits_0^t {A_L(\dot{c}_L(\tau))\, d\tau}\biggr)}, \qquad q_R = \exp{\biggl(\int\limits_0^t{A_R(\dot{c}_R(\tau))\, d\tau}\biggr)}.
$$
Now from equations~\eqref{eq-cR-conditions}, \eqref{eq-connections-factor-coinscide}, \eqref{eq-connections} it follows that
$q_R = f^{-1}(q_L^{-1})f$.

We obtain
$$
\pi(\gamma_R(t)) = R_{q_R^{-1}} \pi(c_R(t)) = f q_R^{-1} = f f^{-1} q_L f = L_{q_L} \pi(c_L(t)) = \pi(\gamma_L(t)).
$$
But $\gamma_L(t), \gamma_R(t) \in \Gamma$, thus $J_R(\gamma_L(t)) = J_R(\gamma_R(t)) = p_t$, and $\gamma_L(t)$ and $\gamma_R(t)$ are uniquely defined by their projections to the Lie group $G$. So, $\gamma_L(t) = \gamma_R(t)$.
\end{proof}

\begin{lemma}
\label{lemma-addition}
Consider the curves $h_L$ and $h_R$ defined by~\eqref{eq-addition-L}--\eqref{eq-addition-R}, then $h_R(t) = r^{-1}h_L(t)^{-1}r$,
where $r = \pi(\gamma_L(t))$.
\end{lemma}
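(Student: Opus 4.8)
The plan is to make the reconstruction of $\lambda_L$ and $\lambda_R$ completely explicit and then compare the two expressions. Since $\dim G_{p_0}=\dim G_{p_t}=1$ and both groups are connected, they are abelian; write $h_L(\tau)=\exp(a(\tau)\xi_L)$ and $h_R(\tau)=\exp(b(\tau)\xi_R)$ with $a(0)=b(0)=0$. Differentiating the relations $\lambda_L(\tau)=L(h_L(\tau))\gamma_L(\tau)$ and $\lambda_R(\tau)=R(h_R(\tau))\gamma_R(\tau)$, applying the connections, and using horizontality $A_L\dot\gamma_L=A_R\dot\gamma_R=0$ together with property~(1) of a principal connection (property~(2) is vacuous, $\Ad$ being trivial on an abelian group), I get $A_L(\dot\lambda_L(\tau))=\dot a(\tau)\,\xi_L$ and $A_R(\dot\lambda_R(\tau))=\dot b(\tau)\,\xi_R$. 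Substituting $\dot\lambda_L=\Hl(\lambda_L)$, $\dot\lambda_R=\Hr(\lambda_R)$ and the explicit connections~\eqref{eq-connections}, then integrating,
$$
a(t)=\frac{1}{p_0(\xi_L)}\int_0^t\theta(\Hl(\lambda_L(\tau)))\,d\tau,\qquad b(t)=-\frac{1}{p_t(\xi_R)}\int_0^t\theta(\Hr(\lambda_R(\tau)))\,d\tau .
$$

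Next I would dispose of the conjugation. By Lemma~\ref{lemma-horizontal}, $r=\pi(\gamma_L(t))=q_L\,\pi(c_L(t))=q_Lf$ with $q_L\in G_{p_0}$, and $\xi_L=(\Ad f)\xi_R$. Since $G_{p_0}$ is abelian, $(\Ad q_L^{-1})\xi_L=\xi_L$, hence $(\Ad r^{-1})\xi_L=(\Ad f^{-1})\xi_L=\xi_R$, and therefore $r^{-1}h_L(t)^{-1}r=\exp(-a(t)\,\xi_R)$. Comparing with $h_R(t)=\exp(b(t)\,\xi_R)$, it suffices to prove $b(t)=-a(t)$, which, by the displayed formulas and the equality of the two factors $p_0(\xi_L)=p_t(\xi_R)$ from~\eqref{eq-connections-factor-coinscide}, amounts to the scalar identity
$$
\int_0^t\theta(\Hl(\lambda_L(\tau)))\,d\tau=\int_0^t\theta(\Hr(\lambda_R(\tau)))\,d\tau .
$$

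Finally I would identify the two integrands pointwise. For a left-invariant Hamiltonian one has $\theta(\Hl(\lambda))=p(d_pH)$, where $p=J_R(\lambda)$ is the left-trivialization of $\lambda$; along $\lambda_L$ this trivialization equals $J_R(\lambda_L(\tau))=e^{\tau\Hv}p_0=p_\tau$ by item~\ref{it-projection-of-Hamiltonian-vector-fields}, so the left integrand is $p_\tau(d_{p_\tau}H)$. The same computation for the right-invariant Hamiltonian $\RH=H\circ S^*$ gives $\theta(\Hr(\lambda))=q(d_q\RH)$ with $q=J_L(\lambda)$, and along $\lambda_R$ this equals $J_L(\lambda_R(\tau))=e^{-\tau\Hv}p_t=p_{t-\tau}$, again by item~\ref{it-projection-of-Hamiltonian-vector-fields}; so the right integrand is $p_{t-\tau}(d_{p_{t-\tau}}\RH)$. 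Now $\RH|_{\g^*}=H\circ\sigma^*$ by~\eqref{eq-coalgebra-restriction}, and differentiating the invariance $H\circ\sigma^*=H$ yields $d_q\RH=\sigma(d_{\sigma^*q}H)=d_qH$ for every $q\in\g^*$; hence the right integrand equals $p_{t-\tau}(d_{p_{t-\tau}}H)$, and the substitution $s=t-\tau$ turns the right-hand integral into the left-hand one. This finishes the proof.

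I expect the delicate point to be the first step: keeping the conventions for the left $G_{p_0}$- and right $G_{p_t}$-actions consistent — in particular the fact that the ``right action'' $R(\cdot)$ translates base points by the inverse, which is exactly what produces the inversion in $r^{-1}h_L(t)^{-1}r$ — and checking the identity $\theta(\Hl(\lambda))=J_R(\lambda)(d_{J_R(\lambda)}H)$ together with its right-invariant analogue. The scalar identity of the last step is where the hypothesis that $\sigma^*$ preserves $H$ enters essentially; the rest is formal once Lemma~\ref{lemma-horizontal} is available.
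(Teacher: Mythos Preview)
Your argument is correct and follows essentially the same route as the paper: derive the reconstruction ODEs $\dot h_L=h_L\,A_L(\Hl(\lambda_L))$, $\dot h_R=h_R\,A_R(\Hr(\lambda_R))$, evaluate the connections on the Hamiltonian fields via $\theta$ to obtain $h_L(t)=\exp(I\xi_L)$ and $h_R(t)=\exp(-I\xi_R)$ with the common integral $I=\tfrac{1}{p_0(\xi_L)}\int_0^t p_\tau(d_{p_\tau}H)\,d\tau$, and then pass from conjugation by $f$ to conjugation by $r=q_Lf$ using that $q_L\in G_{p_0}$ commutes with $h_L(t)$. The only cosmetic difference is that you make explicit the step $\RH|_{\g^*}=H\circ\sigma^*=H$ (hence $d_q\RH=d_qH$), which the paper uses tacitly when it writes $d_{p_{t-\tau}}H$ in the formula for $\eta_R$.
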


\begin{proof}
The differential equations for $h_L$ and $h_R$ read as follows:
\begin{align*}
&\dot{h}_L(\tau) = h_L(\tau) \eta_L(\tau), \qquad h_L(0) = \id, \qquad \ \eta_L(\tau) = A_L(\Hl(\lambda_L(\tau))), \\
&\dot{h}_R(\tau) = h_R(\tau) \eta_R(\tau), \qquad h_R(0) = \id, \qquad \eta_R(\tau) = A_R(\Hr(\lambda_R(\tau))).
\end{align*}
By definitions of connections $A_L$, $A_R$ and Hamiltonian vector fields $\Hl$, $\Hr$ we obtain
\begin{align*}
&\eta_L(\tau) = \frac{\xi_L}{p_0(\xi_L)} (L_{g_L(\tau)^{-1}}^* p_{\tau}) (L_{g_L(\tau)} d_{p_{\tau}} H) = \frac{\xi_L}{p_0(\xi_L)} p_{\tau}(d_{p_{\tau}} H), \\
&\eta_R(\tau) = -\frac{\xi_R}{p_t(\xi_R)} (R_{g_R(\tau)^{-1}}^* p_{t-\tau}) (R_{g_R(\tau)} d_{p_{t-\tau}} H) = -\frac{\xi_R}{p_t(\xi_R)} p_{t-\tau}(d_{p_{t-\tau}} H),
\end{align*}
where $g_L(\tau) = \pi(\lambda_L(\tau))$ and $g_R(\tau) = \pi(\lambda_R(\tau))$.

Since $G_{p_0}$ and $G_{p_t}$ are commutative groups, using~\eqref{eq-connections-factor-coinscide} we have
$$
h_L(t) = \exp\biggl(\int\limits_0^t{\eta_L(\tau)\, d\tau}\biggr) = \exp(I \xi_L), \qquad
h_R(t) = \exp\biggl(\int\limits_0^t{\eta_R(\tau)\, d\tau}\biggr) = \exp(-I \xi_R),
$$
where $I = \frac{1}{p_0(\xi_L)}\int_0^t{p_{\tau}(d_{p_{\tau}}H)\, d\tau}$. It follows that
$$
h_R(t) = f^{-1} h_L^{-1}(t) f = r^{-1} (rf^{-1}) h_L^{-1}(t) (rf^{-1})^{-1} r.
$$
Since $rf^{-1} \in G_{p_0}$, it commutate with $h_L^{-1}(t)$, we get $h_R(t) = r^{-1} h_L^{-1}(t) r$.
\end{proof}

To finish the proof of Proposition~\ref{prop-flows-of-left-and-right-Hamiltonian-vector-fields} we obtain
$$
\pi(\lambda_R(t)) = \pi(\gamma_R(t)) h_R(t)^{-1} = (\pi(\gamma_R(t)) r^{-1})h_L(t)r.
$$
Notice that $(\pi(\gamma_R(t)) r^{-1}), h_L(t) \in G_{p_0}$, so these two elements commutate. Thus
$$
\pi(\lambda_R(t)) = h_L(t) (\pi(\gamma_R(t)) r^{-1})r = h_L(t) \pi(\gamma_L(t)) = \pi(\lambda_L(t)).
$$
Since $\lambda_L(t), \lambda_R(t) \in \Gamma$, these elements are uniquely defined by their projections to the Lie group $G$. So, $\lambda_L(t) = \lambda_R(t)$, and we finish the proof of Proposition~\ref{prop-flows-of-left-and-right-Hamiltonian-vector-fields}.

Finally, notice that $\{ p \in \g^* \ | \ p(\g_p) \neq 0 \}$ is an open dense subset in $\g^*$.
Consider the open dense subset $\mathcal{W} = \{ p \in \g^* \ | \ \codim{(\Ad^*{G})p} = 1, \ p(\g_p) \neq 0 \} \subset \g^*$. Define symmetry $s$ on this subset in the pre-image of exponential map.
By Proposition~\ref{prop-flows-of-left-and-right-Hamiltonian-vector-fields} we have
$e^{t\Hl} (p) = e^{t\Hr} \circ e^{t\Hv} (p)$ for $p \in \mathcal{W}$. From Lemmas~\ref{lemma-new-flow}, \ref{lemma-case-b-changes-Hamiltonian-to-right-invariant} it follows that this is equal to
$(S^*)^{-1} \circ e^{t\Hl} \circ S^* \circ e^{t\Hv}$, and we obtain
$$
e^{t\Hl} \circ S^* \circ e^{t\Hv} (p) = S^* \circ e^{t\Hl} (p) \quad \text{for} \quad p \in \mathcal{W}.
$$
Now Theorem~\ref{th-symmetry} follows from~\eqref{eq-projection}. $\Box$

\section{\label{section-example}Action of symmetries on a semi-direct product}

Applications of the geometric control theory often deal with left-invariant control problems on semi-direct products of Lie groups.
The considered method was successfully applied to sub-Riemannian problems on the Lie groups $\SE_2 = \R^2\leftthreetimes\SO_2$ (Yu.\,L.~Sachkov, I.~Moiseev~\cite{sachkov-moiseev-se-1, sachkov-se-2, sachkov-se-3}), $\SH_2 = \R^2\leftthreetimes\R$ (Ya.\,A.~Butt, Yu.\,L.~Sachkov, A.\,I.~Bhatti~\cite{sh2-1, sh2-2}), $\SO_3\times\R^2$ (the problem of a rolling sphere on the plane without twisting and slipping, Yu.\,L.~Sachkov~\cite{sachkov-sphere-plane}). In these papers an action of symmetries of exponential map on endpoints of extremal trajectories was described. From Theorem~\ref{th-symmetry} we get a general formula for this action.

\begin{corollary}
\label{crl-semidirect}
Let $G = G_1 \leftthreetimes_b G_2$, where $b : G_2 \rightarrow \Aut{G_1}$ is a homomorphism. Under conditions of Theorem~$\ref{th-symmetry}$ a symmetry of the exponential map acts on the Lie group $G$ in the following way:
$$
S^{-1} (g_1, g_2) =
\left\{
\begin{array}{rl}
(S^{-1}g_1, S^{-1}g_2), & \text{in case~\emph{(a)}}, \\
(b(S^{-1}g_2)(S^{-1}g_1), S^{-1}g_2), & \text{in case~\emph{(b)}}, \\
\end{array}
\right.
$$
\end{corollary}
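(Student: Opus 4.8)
The plan is to deduce the formula from two facts: that, by Theorem~\ref{th-symmetry}, the symmetry acts on $G$ through the map $S^{-1}$, which is a group automorphism in case~(a) and a group anti-automorphism in case~(b); and that the (anti-)automorphism $\sigma$ respects the decomposition $\g = \g_1 \oplus \g_2$, so that $S^{-1}$ restricts to maps $G_1 \to G_1$ and $G_2 \to G_2$ --- which is exactly what makes the symbols $S^{-1}g_1$ and $S^{-1}g_2$ meaningful.

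First I would record the elementary algebraic inputs. Since $S$ is an (anti-)automorphism of $G$ with $d_{\id}S = \sigma$, so is $S^{-1}$; in particular in case~(b) one has $S^{-1}(xy) = S^{-1}(y)\,S^{-1}(x)$ for all $x,y\in G$. Identifying $g_1$ with $(g_1,\id)$ and $g_2$ with $(\id,g_2)$, I would use the two defining identities of the semidirect product, namely $(g_1,g_2) = (g_1,\id)\cdot(\id,g_2)$ and $(\id,g_2)\cdot(g_1',\id) = (b(g_2)(g_1'),\,g_2)$, together with $S^{-1}(g_1,\id) = (S^{-1}g_1,\id)$ and $S^{-1}(\id,g_2) = (\id,S^{-1}g_2)$.

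Then the two cases are a one-line computation each. In case~(a), applying the homomorphism $S^{-1}$ to $(g_1,g_2) = (g_1,\id)\cdot(\id,g_2)$ gives $S^{-1}(g_1,g_2) = (S^{-1}g_1,\id)\cdot(\id,S^{-1}g_2) = (S^{-1}g_1,\,S^{-1}g_2)$. In case~(b), applying the anti-homomorphism $S^{-1}$ reverses the order of the two factors, so $S^{-1}(g_1,g_2) = S^{-1}(\id,g_2)\cdot S^{-1}(g_1,\id) = (\id,S^{-1}g_2)\cdot(S^{-1}g_1,\id)$, and the semidirect-product multiplication rule turns this into $(b(S^{-1}g_2)(S^{-1}g_1),\,S^{-1}g_2)$, as claimed.

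The only non-formal step --- the one I would flag as the real content, such as it is --- is the hypothesis that $\sigma$ preserves the splitting $\g_1\oplus\g_2$; without it neither side of the asserted identity is well defined. In every application quoted in this section this is automatic because $G_1$ is a characteristic subgroup of $G$ (the commutator subgroup for $\SE_2$ and $\SH_2$, a direct factor for $\SO_3\times\R^2$), and in the statement of the corollary it is to be understood as a standing assumption which I would state explicitly at the outset of the proof.
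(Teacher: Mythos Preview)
Your argument is correct and is exactly the one the paper has in mind: the paper's proof is a single sentence (``Immediately follows from Theorem~\ref{th-symmetry} and multiplication law in the group $G$, that reads as $(g_1,g_2)\cdot(h_1,h_2)=(g_1\,b(g_2)h_1,\,g_2h_2)$''), and your computation simply spells this out. Your remark that the well-definedness of $S^{-1}g_1$ and $S^{-1}g_2$ tacitly presupposes that $\sigma$ preserves the splitting $\g_1\oplus\g_2$ is a genuine point the paper leaves implicit; it is worth stating, as you do.
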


\begin{proof}
Immediately follows from Theorem~\ref{th-symmetry} and multiplication law in the group $G$, that reads as
$(g_1, g_2) \cdot (h_1, h_2) = (g_1 b(g_2) h_1, g_2 h_2)$.
\end{proof}

\begin{example}
Corollary~\ref{crl-semidirect} allows us to clarify the geometric sense of the Maxwell strata that were found in paper~\cite{sachkov-moiseev-se-1} for the sub-Riemannian problem on the Lie group $\SE_2$ (group of proper isometries of Euclidean plane). In other words this sub-Riemannian problem models an optimal parking of unicycle.

A point $(v, R_{\varphi}) \in \R^2 \leftthreetimes \SO_2$ is a fixed point of a symmetry of case~(b) iff $S(R_{\varphi}) = R_{\varphi}$ and $S(v) = R_{-\varphi}v$, where $R_{\varphi}$ is rotation by angle $\varphi$. We have two possibilities.

The first one is when $S|_{\SO_2}$ is the inversion. In this case if $S(R_{\varphi}) = R_{\varphi}$, then $\varphi = 0$ or
$\varphi = \pi$. So, $S|_{\R^2}$ equals $\id$ or $-\id$, respectively. We have two possible sets of fixed points: the set of translations and the set of central symmetries. Both these sets are disks that appear as Maxwell strata.

The second possibility is $S|_{\SO_2} = \id$. Then $S|_{\R^2}$ is a reflection with respect to a line $l_S$.
From $S(v) = R_{-\varphi}v$ it follows that the set of fixed points is the set of rotations around points located on the line $l_S^\bot$ that is orthogonal to the line $l_S$ (including translations along the line $l_S$, considered as rotations with infinite center at the line $l_S^\bot$). This set is a M\"obius strip (see Lemma~\ref{lemma-Mobius-strip} below).

But only two M\"obius strips appear as Maxwell strata. Because our symmetry is not just anti-automorphism, but it is induced from a symmetry of the vertical part of the Hamiltonian vector field. These M\"obius strips correspond to two orthogonal lines $x$ and $y$ in $\R^2$, where the base vector of the line $x$ indicates the initial position of unicycle.
\end{example}

\begin{lemma}
\label{lemma-Mobius-strip}
The set of rotations around centers that lies on a fixed line (including infinite center) is a M\"obius strip.
\end{lemma}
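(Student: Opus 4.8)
I would work in orthonormal coordinates on $\SE_2=\R^2\leftthreetimes\SO_2$ chosen (without loss of generality, conjugating by a suitable isometry) so that the fixed line $m$ is the first coordinate axis, writing a group element as $(v,R_\varphi)$ with $v\in\R^2$ and $R_\varphi$ the rotation by $\varphi$. Then the rotation about the centre $c=te_1\in m$ by angle $\varphi$ is the element $\bigl((I-R_\varphi)te_1,R_\varphi\bigr)$, and the ``infinite centre'' on $m$ contributes exactly the translations in the perpendicular direction, i.e. the elements $(se_2,\id)$, $s\in\R$ (these are the limits of the above as $t\to\infty$, $\varphi\to0$ with $t\varphi$ bounded). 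So the set in question is
$$
\mathcal S=\Bigl(\bigcup_{\varphi\in S^1\setminus\{0\}}L_\varphi\Bigr)\cup L_0,\qquad L_\varphi=\{((I-R_\varphi)te_1,R_\varphi):t\in\R\},\qquad L_0=\{(se_2,\id):s\in\R\}.
$$
As a sanity check, right translation by the reflection in $m$ carries $\mathcal S$ bijectively onto the set of all reflections of the plane, hence onto the space of affine lines of $\R^2$, which is classically the open M\"obius band; but the argument below is self-contained.

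Next I would produce a global parametrization making the gluing visible. A half-angle computation gives $(I-R_\varphi)te_1=2t\sin(\varphi/2)\,(\sin(\varphi/2),-\cos(\varphi/2))$, so putting $\psi=\varphi/2$ and $u=2t\sin\psi$ (for fixed $\psi\in(0,\pi)$ this $u$ sweeps out all of $\R$ as $t$ does) one has $L_\varphi=\{(u\sin\psi,-u\cos\psi,R_{2\psi}):u\in\R\}$. Consider
$$
G:[0,\pi]\times\R\longrightarrow\SE_2,\qquad G(\psi,u)=\bigl(u\sin\psi,\,-u\cos\psi,\,R_{2\psi}\bigr).
$$
Its image is precisely $\mathcal S$: the open part $\psi\in(0,\pi)$ gives the lines $L_\varphi$, while $G(0,u)=(0,-u,\id)$ and $G(\pi,u)=(0,u,\id)$ both sweep out $L_0$. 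Since $G(0,u)=G(\pi,-u)$ and $G$ is injective elsewhere, $G$ factors through a bijection $\bar G$ from the quotient $\bigl([0,\pi]\times\R\bigr)\big/\bigl((0,u)\sim(\pi,-u)\bigr)$ — which by definition is the open M\"obius band — onto $\mathcal S$.

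It then remains to see that $\bar G$ is a diffeomorphism onto $\mathcal S$ regarded as an embedded surface in $\SE_2$. On the open part this is clear (each $L_\varphi$ is an embedded affine line and the $\SO_2$-coordinate $R_{2\psi}$ separates distinct $\psi$). Across the glued edge $\psi\equiv0\equiv\pi$ I would use the single chart $F(\varphi,u)=\bigl(u\sin(\varphi/2),-u\cos(\varphi/2),R_\varphi\bigr)$ with $\varphi$ in a small interval about $0$ in $S^1$, the reparametrization $u\mapsto-u$ absorbing the flip in the identification, and check directly that $F$ is a smooth injective immersion whose image is a neighbourhood of a translation in $\mathcal S$: indeed $\partial_uF=(\sin(\varphi/2),-\cos(\varphi/2),0)\neq0$ has zero $\SO_2$-component while $\partial_\varphi F$ has nonzero $\SO_2$-component, so the two are independent. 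Hence $\mathcal S$ is an embedded surface in $\SE_2$ and $\bar G$ is a diffeomorphism, so $\mathcal S$ is a M\"obius strip.

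The only genuine obstacle is this last local analysis at the ``infinite centre'': one must see both that $\mathcal S$ is smooth there — the rotations about far-off centres limit onto the translations tangentially rather than transversally — and that traversing the $\psi$-circle once reverses the $u$-fibre, which is exactly what makes $\mathcal S$ a M\"obius strip and not a cylinder.
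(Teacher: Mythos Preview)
Your argument is correct and takes a genuinely different route from the paper's. The paper proceeds by topological surgery: it starts from the compactified parameter space $[-\infty,+\infty]\times S^1$ of (center, angle) pairs, identifies the two boundary circles with a flip to obtain a Klein bottle, collapses the zero-angle circle (all of whose points represent the identity) to pass to $\R P^2$, and finally deletes the single point corresponding to the ``infinite translation'' to arrive at an open M\"obius strip. Your proof instead writes down an explicit global chart $G(\psi,u)=(u\sin\psi,-u\cos\psi,R_{2\psi})$ via the half-angle substitution and reads off the M\"obius identification $G(0,u)=G(\pi,-u)$ directly.

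What your approach buys is precision and a stronger conclusion: the paper's identifications at $\pm\infty$ are left somewhat informal, whereas your parametrization makes both the smooth structure and the embedding into $\SE_2$ explicit, so you actually show $\mathcal S$ is a smoothly embedded M\"obius band rather than merely homeomorphic to one. The observation that conjugation by the reflection in $m$ carries $\mathcal S$ onto the space of affine lines is a nice bonus that the paper does not mention. What the paper's approach buys is a picture in terms of classical surfaces (cylinder $\to$ Klein bottle $\to$ $\R P^2$ minus a point), which some readers may find more conceptual; it also makes transparent why exactly one point has to be excluded.
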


\begin{proof}
Consider a cylinder $[-\infty, +\infty] \times S^1$ of pairs consisting of a center of rotation and an angle of rotation. To construct the set mentioned above from this cylinder one needs to perform the following steps (see Figure~\ref{pic-moebius}):\\
1. identify the bases of the cylinder (by identifying opposite points on the base circles);\\
2. identify all points of the circle $S^1_0$, corresponding to rotations by zero angle, because all points of this circle generate the same transformation of the plane;\\
3. remove the point $T_{\infty}$ (on the identified bases) that corresponds to the infinite translation.

\begin{figure}[h!]
\caption{The M\"obius strip consisting of rotations around centers that are on a fixed line. Base circles are identified by the directions of arrows.}
\label{pic-moebius}
\center{\includegraphics[height=0.2\linewidth,width=0.75\linewidth]{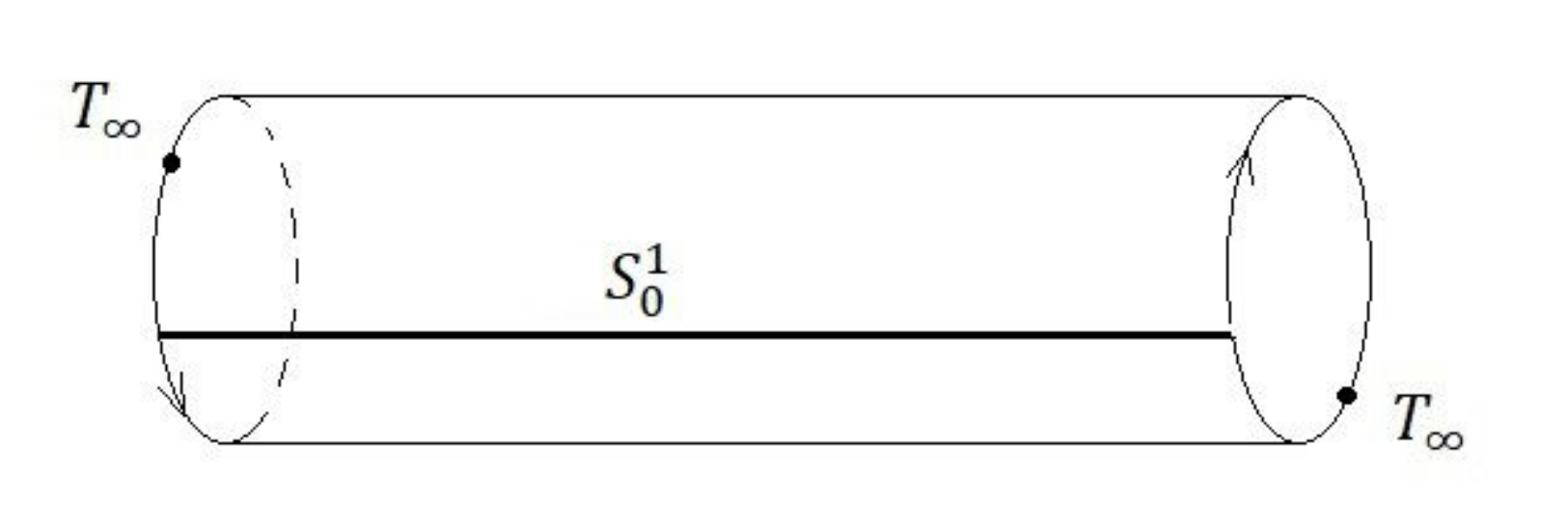}}
\end{figure}

We get a Klein bottle after step~1. To make step~2, cut the bottle along a circle, we get a M\"obius strip. Identify all its boundary points, i.e., glue this M\"obius strip and a disk along their boundaries. We get a projective plane. It remains to cut one point (step~3), we get a M\"obius strip.
\end{proof}

\begin{example}
The geometric sense of the Maxwell strata for sub-Riemannian problem on the Lie group $\SH_2$~\cite{sh2-1} is quite similar to $\SE_2$-case.
There are the set of translations and two sets of hyperbolic rotations around points located on two orthogonal lines
(including infinite centers of rotations).
All of these sets are homeomorphic to $\R^2$.
\end{example}

\end{document}